\definecolor{darkblue}{RGB}{0,0,170}
\definecolor{brickred}{RGB}{200,0,0}
\newcommand{\R}{\mathbb{R}}
\newcommand{\N}{\mathbb{N}}
\newcommand{\C}{\mathcal{C}}
\newcommand{\A}{\mathcal{A}}
\newcommand{\T}{\mathcal{T}}
\newcommand{\eps}{\varepsilon}
\newcommand{\dist}{\hbox{dist}}
\newcommand{\Ds}{{\left(-\lapl\right)}^s}
\newcommand{\lapl}{\triangle}
\newcommand{\grad}{\triangledown\!}
\newtheoremstyle{mytheoremstyle} 
    {\topsep}                    
    {\topsep}                    
    {\itshape}                   
    {}                           
    {\bf\scshape}                   
    {.}                          
    {.5em}                       
    {\thmname{#1}\thmnumber{\oldstylenums{ #2}}\thmnote{: #3}}  
\theoremstyle{mytheoremstyle}
\newtheorem{defi}{definition}[]
\newtheorem{ex}[defi]{example}
\newtheorem{theo}[defi]{theorem}
\newtheorem{prop}[defi]{proposition}
\newtheorem{lem}[defi]{lemma}
\newtheorem{rmk}[defi]{remark}
\titleformat{\section}{\centering\Large\bf\scshape}{\oldstylenums{\thesection}.}{.4em}{}[]
\titleformat{\subsection}{\normalfont\large\bfseries}{\oldstylenums{\thesubsection}}{.4em}{}[]
\title{\bfseries\scshape\LARGE Very large solutions for the fractional Laplacian: towards a fractional Keller-Osserman condition}
\author{\large\scshape nicola abatangelo\footnote{Universit\'e Libre de Bruxelles, 
Avenue Franklin Roosevelt 50, 1050 Bruxelles, Belgium.}} 
\date{ }
\begin{document}

\maketitle

%

\begin{center}
\begin{minipage}{.9\textwidth}\small
{\bf   \scshape{abstract.}} We look for solutions of
\[
\Ds u+f(u)\ =\ 0\ \hbox{ in a bounded smooth domain $\Omega$},\ s\in(0,1),
\]
with a strong singularity at the boundary. 
In particular, we are interested in solutions which are $L^1(\Omega)$ and higher order
with respect to dist$(x,\partial\Omega)^{s-1}$. We provide
sufficient conditions for the existence of such a solution.
Roughly speaking, these functions are the real fractional counterpart
of {\it large solutions} in the classical setting.
\end{minipage}
\end{center}
%
\bigskip\bigskip

{\small
\noindent {\it Key words:} fractional Laplacian, large solutions, Keller-Osserman condition.\\
\noindent {\it 2010 MSC:} Primary: 35A01, 45K05, 35B40; Secondary: 35S15.
}

\section{introduction}

In the theory of semilinear elliptic equations,
functions solving
\begin{equation}\label{eq}
-\lapl u+ f(u)=0,\qquad\hbox{in some }\Omega\subseteq\R^N\hbox{ open and bounded}
\end{equation}
coupled with the boundary condition
\[
\lim_{x\rightarrow\partial\Omega}u(x)=+\infty
\]
are known as {\it boundary blow-up solutions} or {\it large solutions}.
There is a huge amount of bibliography dealing with this problem
which dates back to the seminal work of Bieberbach \cite{bieberbach}, for $N=2$ and $f(u)=e^u$.
Keller \cite{keller} and Osserman \cite{osserman} independently established a sufficient and necessary
condition on the nonlinear term $f$ for the existence of a boundary blow-up solution:
it takes the form
\begin{equation}\label{ko}
\int^{+\infty}\frac{dt}{\sqrt{F(t)}}\ <\ +\infty,\qquad\hbox{where }F'=f\geq 0
\end{equation}
and it is known as {\it Keller-Osserman condition}.
One can find these solutions with singular behaviour at the boundary in a number of applications:
for example, Loewner and Nirenberg \cite{ln} studied the case $f(u)=u^{(N+2)/(N-2)},\;N\geq3$,
which is strictly related to the {\it singular Yamabe problem} in conformal Geometry,
while Labutin \cite{labutin} completely characterized the class of sets $\Omega$
that admit a large solution for $f(u)=u^q,\,q>1,$ with capacitary methods 
inspired by the theory of {\it spatial branching processes}, that are particular stochastic processes;
see also the purely probabilistic works
by Le Gall \cite{legall} and Dhersin and Le Gall \cite{legall-dhersin} dealing with the particular case $q=2$.
\smallskip

In this paper we tackle equation \eqref{eq} when the Laplacian operator is replaced 
by one of its fractional powers. The fractional Laplacian $\Ds,\;s\in(0,1)$, 
is an integral nonlocal operator of fractional order
which admits different equivalent definitions, see e.g. Di Nezza, Palatucci and Valdinoci \cite{hitchhiker}: 
we will use the following
\begin{equation}\label{fr-lapl}
\Ds u(x)\ =\ \A(N,s)\,PV\int_{\R^N}\frac{u(x)-u(y)}{|x-y|^{N+2s}}\;dy\ =
\ \A(N,s)\,\lim_{\eps\downarrow0}\int_{\{|y-x|>\eps\}}\frac{u(x)-u(y)}{|x-y|^{N+2s}}\;dy
\end{equation}
where $\A(N,s)$ is a renormalizing positive constant. This operator generates\footnote[1]{Recall 
that $-\lapl$ is the infinitesimal generator of the Wiener process, 
modelling the Brownian motion.}
a Wiener process subordinated in time with an $s$-stable L\'evy process.
The Dirichlet problem related to $\Ds$ is of the form
\[
\left\lbrace\begin{aligned}
\Ds u &= f & & \hbox{in }\Omega \\
u &= g & & \hbox{in } \R^N\setminus\Omega
\end{aligned}\right.
\]
because the data have to take into account the nonlocal character of the operator.
Nevertheless in \cite{a} the author showed how this problem is ill-posed in a weak $L^1$ sense, of Stampacchia's sort,
unless a singular trace is prescribed at the boundary. A well-posed Dirichlet problem needs to deal with two conditions at the same time: namely,
if $d$ denotes the distance to the boundary $\partial\Omega$, it looks like 
\[
\left\lbrace\begin{aligned}
\Ds u &= f & & \hbox{in }\Omega \\
u &= g & & \hbox{in } \R^N\setminus\Omega \\
d^{1-s}u &= h & & \hbox{on }\partial\Omega
\end{aligned}\right.
\]
where the data satisfy the following assumptions
\[
\int_\Omega |f|\,d^s<+\infty,\quad\int_{\R^N\setminus\Omega}|g|d^{-s}\min\{1,d^{-N-s}\}<+\infty,\quad\|h\|_{L^\infty(\partial\Omega)}<+\infty.
\]
Further references in this direction are the recent works by Grubb \cites{grubb1,grubb2},
where also the regularity up to the boundary is investigated.
This means in particular that in the context of fractional Dirichlet problems there are solutions with an explosive behaviour at the boundary
as a result of a linear phenomenon: for instance the solutions to
\[
\left\lbrace\begin{aligned}
\Ds u &= 0 & & \hbox{in }B_1 \\
u(x) &= (|x|^2-1)^{-s/2} & & \hbox{in } \R^N\setminus B_1 \\
d^{1-s}u &= 0 & & \hbox{on }\partial B_1.
\end{aligned}\right.
\qquad\hbox{and}\qquad
\left\lbrace\begin{aligned}
\Ds u &= 0 & & \hbox{in }B_1 \\
u &= 0 & & \hbox{in } \R^N\setminus B_1 \\
d^{1-s}u &= 1 & & \hbox{on }\partial B_1.
\end{aligned}\right.
\]
are of the order of $O(d^{-s/2})$ and $O(d^{s-1})$ respectively at $\partial B_1$, see \cite{a}.
The existence of harmonic functions of this sort can therefore be used to prove, via a sub- and supersolution argument, 
the existence of boundary blow-up solutions to nonlinear problems
\[
\left\lbrace\begin{aligned}
\Ds u &= -f(x,u) & & \hbox{in }\Omega \\
u &= g & & \hbox{in } \R^N\setminus\Omega \\
d^{1-s}u &= h & & \hbox{on }\partial\Omega.
\end{aligned}\right.
\]
with $f(x,u)\geq 0$; anyhow this singular behaviour is driven by a linear phenomenon rather than a compensation between the nonlinearity and the explosion
(as in the classical case), indeed no growth condition on $f$ arises except when $h\not\equiv 0$,
where one needs
\[
\int_\Omega f(x,d(x)^{s-1})\,d(x)^s\;dx\ <\ \infty
\]
in order to make sense of the weak $L^1$ definition.
\smallskip

For this reason we address here the question of the existence of solutions to problems of the form
\[
\left\lbrace\begin{aligned}
\Ds u &= -f(u) & &\hbox{in }\Omega \\
u &= g & & \hbox{in } \R^N\setminus\Omega, \\
d^{1-s}u &= +\infty & & \hbox{on }\partial\Omega
\end{aligned}\right.
\quad g\geq 0,\ \ \int_{\R^N\setminus\Omega}g\,d^{-s}\min\{1,d^{-N-s}\}=+\infty.
\]
providing sufficient conditions for the solvability.
In doing so, we extend the results by Felmer and Quaas \cite{felmer-quaas} 
and Chen, Felmer and Quaas \cite{chen-felmer} for $f(u)=u^p$, which is the only bibliography available on the topic,
and we also clarify the notion of {\it large solution} in this setting.
The results listed in Theorems \ref{main1} and \ref{main2} below
can be applied for a particular case of the {\it fractional singular Yamabe problem},
see e.g. Gonz\'alez, Mazzeo and Sire \cite{mar-sire}.

\subsection{Hypotheses and main results}

We work in the following set of assumptions:
\begin{itemize}
\item $\Omega$ is a bounded open domain of class $C^2$,
\item $f$ is an increasing $C^1$ function with $f(0)=0$,
\item $F$ is the antiderivative of $f$ vanishing in 0:
\begin{equation}\label{F}
F(t)\ :=\ \int_0^tf(\tau)\;d\tau,
\end{equation}
\item there exist $0<m<M$, such that 
\begin{equation}\label{tech}
1+m\leq \frac{tf'(t)}{f(t)}\leq 1+M,
\end{equation}
and thus $f$ satisfies \eqref{ko} because, integrating the lower inequality,
one gets
\[
f(t)\geq f(1)t^{1+m}\qquad\hbox{and}\qquad
F(t)\geq\frac{f(1)}{2+m}\,t^{2+m};
\]
we can therefore define the function
\begin{equation}\label{phi}
\phi(u)\ :=\ \int_u^{+\infty}\frac{dt}{\sqrt{F(t)}},
\end{equation}
\item $f$ satisfies 
\begin{equation}\label{L1}
\int_1^{+\infty}\phi(t)^{1/s}\;dt\ <\ +\infty.
\end{equation}
\end{itemize}

In what follows we will use the expression $g\asymp h$ where $g,h:(0,+\infty)\to(0,+\infty)$ to shorten
\[
\hbox{there exists }C>0 \hbox{ such that } \frac{h(t)}{C}\leq g(t)\leq Ch(t),\hbox{ for any }t>0.
\]

\begin{rmk}\label{phi-rmk}\rm The function $\phi:(0,+\infty)\rightarrow(0,+\infty)$ is monotone decreasing and
\[
\lim_{t\downarrow0}\phi(t)=+\infty,\qquad\lim_{t\uparrow+\infty}\phi(t)=0.
\]
Moreover
\[
\phi'(u)=-\frac{1}{\sqrt{F(u)}}
\]
is of the same order as $-(u\,f(u))^{-1/2}$ since for $t>0$ and some $\tau\in(0,t)$, 
by the Cauchy Theorem,
\[
\frac{F(t)}{t\,f(t)}=\frac{f(\tau)}{f(\tau)+\tau\,f'(\tau)}\ 
\left\lbrace\begin{aligned}
& \geq\frac{1}{2+M}, \\
& \leq\frac{1}{2+m}.
\end{aligned}\right.
\]
This entails that the order of $\phi(u)$ is the same as $(u/f(u))^{1/2}$ indeed
for $u>0$ and some $t\in(u,+\infty)$
\[
\frac{\sqrt{\frac{u}{f(u)}}}{\phi(u)}=\frac{\frac{1}{2}\sqrt{\frac{f(t)}{t}}\cdot\frac{f(t)-tf'(t)}{f(t)^2}}{\phi'(t)}
\asymp \frac{f(t)-tf'(t)}{-f(t)}= \frac{tf'(t)}{f(t)}-1
\]
which belongs to $(m,M)$ by hypothesis \eqref{tech}.
Note that hypothesis \eqref{L1} is therefore equivalent to
\begin{equation}\label{L1-bis}
\int_1^{+\infty}\left(\frac{t}{f(t)}\right)^{\frac1{2s}}\;dt\ <\ +\infty.
\end{equation}
\end{rmk}

\begin{rmk}\label{8888}\rm
In \cite{keller} and \cite{osserman} condition \eqref{ko}
is proven to be necessary and sufficient for the existence of a solution of
\[
\left\lbrace\begin{aligned}
-\lapl u = -f(u) \hbox{ in }\Omega, \\
\lim_{x\to\partial\Omega}u(x) =  +\infty. 
\end{aligned}\right.
\]
Note that if we set $s=1$ in \eqref{L1} then
\[
+\infty\ >\ \int_{u}^{+\infty}\phi(t)\;dt\asymp\int_{u}^{+\infty}\sqrt\frac{t}{f(t)}\;dt
\asymp\int_{u}^{+\infty}\frac{t}{\sqrt{F(t)}}\;dt 
\]
we get the condition to force the classical solution $u$ to be $L^1(\Omega)$.
Indeed in \cite[Theorem 1.6]{ddgr} it is proved that a solution $u$ satisfies
\begin{equation}\label{ddgr}
\lim_{x\rightarrow\partial\Omega}\frac{\phi(u(x))}{d(x)}=1
\end{equation}
which yields that $u\in L^1(\Omega)$ if and only if $\phi^{-1}$,
the inverse function of $\phi$ (recall it is monotone decreasing),
is integrable in a neighbourhood of $0$, 
i.e. with a change of integration variable
\[
+\infty>\int_0^\eta\phi^{-1}(r)\;dr
=\int_{\phi^{-1}(\eta)}^{+\infty}t|\phi'(t)|\;dt
=\int_{t_0}^{+\infty}\frac{t}{\sqrt{F(t)}}\;dt.
\]
\end{rmk}

Our results can be summarized as follows.

\begin{theo}\label{main1} Suppose that the nonlinear term $f$ satisfies hypotheses \eqref{tech} and \eqref{L1} above and
\begin{equation}\label{E}
\int_{t_0}^{+\infty}f(t)t^{-2/(1-s)}\;dt\ <\ +\infty.
\end{equation}
Then problem
\begin{equation}\label{problem}
\left\lbrace\begin{aligned}
\Ds u &= -f(u) & & \hbox{ in }\Omega \\
u &= 0 & & \hbox{ in }\R^N\setminus\Omega \\
d^{1-s}u &= +\infty & & \hbox{ on }\partial\Omega
\end{aligned}\right.
\end{equation}
admits a solution $u\in L^1(\Omega)$. 
Moreover there exists $c>0$ for which
\begin{equation}\label{bbehav}
\phi(u(x))\ \geq\ c\,d(x)^s\qquad\hbox{in }\Omega.
\end{equation}
\end{theo}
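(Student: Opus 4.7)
The plan is to realise $u$ as the monotone limit of solutions $u_n$ of the approximating \emph{moderate} problems
\[
\Ds u_n = -f(u_n)\ \text{in }\Omega,\qquad u_n = 0\ \text{in }\R^N\setminus\Omega,\qquad d^{1-s}u_n = n\ \text{on }\partial\Omega.
\]
For each fixed $n$ the existence of $u_n$ follows by a standard sub-/super-solution scheme built on the linear $L^1$-theory of \cite{a}: $\underline{u}\equiv 0$ is a subsolution since $f(0)=0$ and $f\geq 0$ on $[0,+\infty)$, while the $s$-harmonic function $U_n$ with boundary trace $n$ is a supersolution because $\Ds U_n = 0 \geq -f(U_n)$. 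Assumption \eqref{E} enters precisely here: since $u_n \asymp n\,d^{s-1}$ near $\partial\Omega$, the change of variable $t = n\,d^{s-1}$ turns the requirement $\int_\Omega f(u_n)\,d^s\,dx < +\infty$ (needed for the weak $L^1$ formulation) into a multiple of $\int^{+\infty}f(t)\,t^{-2/(1-s)}\,dt$. The weak comparison principle of \cite{a} then yields monotonicity $u_n \leq u_{n+1}$.

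The core of the argument is to produce a uniform upper barrier of the form
\[
V(x) = \phi^{-1}\bigl(c\,d(x)^s\bigr)\qquad\text{on }\{x\in\Omega : d(x)<\delta\},
\]
suitably extended to a global nonnegative supersolution for $c,\delta>0$ chosen small. Granted such a $V$, comparison gives $u_n \leq V$ for every $n$ and therefore $\phi(u_n)\geq c\,d^s$, which will survive in the limit as \eqref{bbehav}. The barrier $V$ is in $L^1(\Omega)$ as a consequence of \eqref{L1}: performing the change of variable $t = \phi^{-1}(c\rho^s)$ in the boundary strip integral,
\[
\int_0^\delta \phi^{-1}(c\rho^s)\,d\rho\ \asymp\ \int^{+\infty}\left(\frac{t}{f(t)}\right)^{1/(2s)}\,dt,
\]
which is exactly the equivalent form \eqref{L1-bis} of \eqref{L1}. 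Monotone convergence then produces $u_n \nearrow u$ in $L^1(\Omega)$, satisfying the lower bound \eqref{bbehav}. Passage to the limit in the equation on any compact $K\Subset\Omega$, on which $u_n$ is dominated by $V\in L^\infty(K)$, is handled by dominated convergence and gives $\Ds u = -f(u)$; the blow-up condition $d^{1-s}u=+\infty$ on $\partial\Omega$ is inherited from $u\geq u_n$ and $d^{1-s}u_n\equiv n$ for arbitrary $n$.

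The technical heart of the proof, and the step I expect to be the main obstacle, is showing that $V$ really is a supersolution, i.e.\ $\Ds V + f(V) \geq 0$ close to $\partial\Omega$. This is the fractional, nonlocal analogue of the one-dimensional Keller-Osserman identity: one wishes to reproduce the classical computation $(\phi^{-1}(d))'' = f(\phi^{-1}(d))$ using only the identity $-\phi' = F^{-1/2}$ together with the two-sided control on $tf'(t)/f(t)$ provided by \eqref{tech}. After flattening the boundary this reduces to estimating $\Ds$ on a function that behaves like $\phi^{-1}(c\,x_N^s)$ in $\{x_N>0\}$; the half-space identity $\Ds[(x_N)_+^s]=0$ isolates the genuinely nonlinear contribution, and \eqref{tech}, via the relation $\phi(u)\asymp (u/f(u))^{1/2}$ derived in Remark \ref{phi-rmk}, is used to match this contribution with $f(V)$ up to a constant factor that can be absorbed by taking $c$ small enough. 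Away from $\partial\Omega$ both $V$ and $f(V)$ are bounded and a standard gluing completes the construction of the global supersolution.
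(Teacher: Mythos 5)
Your overall architecture matches the paper's: approximate by the sequence $u_k$ of solutions with singular trace $Eu_k=k$, bound it from above by a barrier of the form $\psi(\cdot\,\delta^s)$ with $\psi=\phi^{-1}$, invoke comparison, pass to the monotone limit, and read off the boundary behaviour. The $L^1$-bound you derive for the barrier via the change of variable is precisely Lemma~\ref{U-L1}, and the passage to the limit plus the argument $d^{1-s}u\geq d^{1-s}u_k\to k$ for the blow-up condition coincide with Steps~1--4 of the paper's proof.

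Where you diverge is in how the barrier is made into a genuine supersolution, and this is also where the real work lies. First, for the technical core you propose to flatten the boundary and use the half-space identity $\Ds[(x_N)_+^s]=0$ to isolate the nonlinear contribution; the paper instead works directly with the principal-value integral for $\psi(\delta^s)$ on all of $\R^N$, splitting $\Omega$ into the annular regions $\Omega_1=\{\delta(y)>\tfrac32\delta(x)\}$, $\Omega_2=\{\tfrac12\delta(x)\le\delta(y)\le\tfrac32\delta(x)\}$, $\Omega_3=\{\delta(y)<\tfrac12\delta(x)\}$, a Taylor expansion in $\Omega_2$ governed by $\|D^2(\psi\circ\delta^s)\|_\infty$, and the crude bound $\psi(\delta(x)^s)-\psi(\delta(y)^s)\ge -\psi(\delta(y)^s)$ in $\Omega_3$, with everything measured against $\psi''(\delta^s)$ via \eqref{psitech}--\eqref{mono-psi2}. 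You correctly flag this as the main obstacle, but your proposal leaves it as a gesture; in particular, the half-space reduction would itself require quantifying the error from $\partial\Omega$ not being flat and from $\psi(c\,x_N^s)$ not being exactly $s$-harmonic outside the boundary layer, which is precisely what Proposition~\ref{impo} carries out by hand.

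Second, to upgrade $\Ds U\ge -Cf(U)$ (near $\partial\Omega$, $C$ possibly large) into a full supersolution, you propose shrinking the constant $c$ in the \emph{argument} of $\psi$. Note that the paper does something else: Lemma~\ref{mod-supersol} sets $\overline u=\mu\,\psi(\delta^s)+\lambda\xi$ with $\mu$ large and $\xi$ the torsion function, absorbing $C$ through the monotonicity relation \eqref{mono-f} rather than through the argument of $\psi$. Your variant is in fact workable: tracking the $c$-dependence in the paper's estimate, the dominant bound on $\Ds[\psi(c\delta^s)]$ is $-C\,\psi(c\delta^s)/\delta^{2s}$ with $C$ independent of $c$, while by \eqref{mono-psi2} one has $f(\psi(c\delta^s))=2\psi''(c\delta^s)\asymp \psi(c\delta^s)/(c^2\delta^{2s})$, so $c$ small enough does win. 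So this is a legitimate alternative; it relies on exactly the same two-sided control \eqref{tech} as the paper's mechanism.

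The one place where I would push back harder is the remark that ``away from $\partial\Omega$ both $V$ and $f(V)$ are bounded and a standard gluing completes the construction.'' There is no local gluing for a nonlocal operator: modifying $V$ on a compact set changes $\Ds V$ everywhere. The paper's device is to add $\lambda\xi$ \emph{globally}, where $\Ds\xi=1$ in $\Omega$, $\xi=0$ outside, and then choose $\lambda$ large enough to dominate $\|\Ds V\|_{L^\infty}$ on the interior compact set. You should replace the gluing remark with this torsion-function correction (or an equivalent global device); as phrased, that step of your construction has a genuine gap.
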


\begin{rmk}\rm The condition $u\in L^1(\Omega)$ is necessary to make sense
of the fractional Laplacian, see equation \eqref{fr-lapl}. Also,
compare the boundary behaviour in this setting expressed by equation \eqref{bbehav},
with the classical one in equation \eqref{ddgr}.
\end{rmk}

\begin{theo}\label{main2} Suppose that the nonlinear term $f$ satisfies hypotheses \eqref{tech} and \eqref{L1} above and 
\begin{eqnarray}
& g:\R^N\setminus\Omega\longrightarrow[0,+\infty),\qquad g\in L^1(\R^N\setminus\Omega) & \nonumber \\
& \phi(g(x))\ \geq\ d(x)^s,\qquad\hbox{near }\partial\Omega. & \label{g2}
\end{eqnarray}
Then problem
\begin{equation}\label{gproblem}
\left\lbrace\begin{aligned}
\Ds u &= -f(u) & \hbox{ in }\Omega \\
u &= g & \hbox{ in }\R^N\setminus\Omega 
\end{aligned}\right.
\end{equation}
admits a solution $u\in L^1(\Omega)$. 
Moreover there exists $c>0$ for which
\[
\phi(u(x))\ \geq\ c\,d(x)^s\qquad\hbox{near }\partial\Omega.
\]
\end{theo}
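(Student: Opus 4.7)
The plan is to reduce Theorem \ref{main2} to Theorem \ref{main1} via a sub-/supersolution argument. The decisive observation is that, since $f$ is monotone and $\Ds$ is linear, adding a nonnegative harmonic lift of the exterior datum $g$ to a ``zero-exterior'' large solution still produces a supersolution.

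First, let $u_0$ denote the solution of \eqref{problem} produced by Theorem \ref{main1}, extended by zero in $\R^N\setminus\Omega$; then $u_0\in L^1(\Omega)$ and $u_0(x)\leq \phi^{-1}(c\,d(x)^s)$ near $\partial\Omega$. Solve next the linear Dirichlet problem
\[
\Ds V=0 \hbox{ in }\Omega,\qquad V=g \hbox{ in }\R^N\setminus\Omega,\qquad d^{1-s}V=0 \hbox{ on }\partial\Omega,
\]
which is well posed in the weak $L^1$-framework of \cite{a}; by the maximum principle $V\geq 0$, and combining hypothesis \eqref{g2} with the Poisson representation of the fractional Laplacian together with the power-like asymptotics of $\phi^{-1}$ at $0$ implied by \eqref{tech}, I would prove the pointwise estimate $V(x)\lesssim \phi^{-1}(c\,d(x)^s)$ near $\partial\Omega$.

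Setting $\underline u:=u_0$ and $\bar u:=u_0+V$, one has $\underline u=0\leq g$ and $\bar u=g$ outside, while $\underline u\leq \bar u$ in $\Omega$ since $V\geq 0$. In $\Omega$, $\underline u$ solves the equation exactly, and
\[
\Ds \bar u=\Ds u_0+\Ds V=-f(u_0)\geq -f(u_0+V)=-f(\bar u)
\]
by the monotonicity of $f$, so $\bar u$ is a supersolution. Invoking the sub-/supersolution machinery for semilinear fractional Dirichlet problems (in the spirit of \cite{felmer-quaas,chen-felmer}, adapted to the weak $L^1$ formulation of \cite{a}), one obtains a solution $u$ of \eqref{gproblem} with $\underline u\leq u\leq \bar u$ in $\Omega$ and $u=g$ outside. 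Then $u\in L^1(\Omega)$ by sandwich, and $u\leq u_0+V\lesssim \phi^{-1}(c\,d^s)$ near $\partial\Omega$; applying $\phi$ and absorbing the multiplicative factor using the power-like behavior of $\phi^{-1}$ finally gives $\phi(u)\geq c'\,d^s$ near $\partial\Omega$.

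The hard part will be the upper bound on $V$ near $\partial\Omega$: one must balance the decay of the fractional Poisson kernel against the controlled growth of $g$ given by \eqref{g2}, uniformly as $x\to\partial\Omega$. A secondary concern is checking that the sub-/supersolution scheme operates within the weak $L^1$ formulation used in the paper, given that both $\underline u$ and $\bar u$ are singular at the boundary.
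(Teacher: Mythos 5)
Your plan hinges on the $s$-harmonic lift $V$ of the exterior datum $g$, but this object need not exist under the hypotheses of Theorem \ref{main2}. The linear theory of \cite{a} requires
\[
\int_{\R^N\setminus\Omega}|g|\,\delta^{-s}\min\{1,\delta^{-N-s}\}\ <\ +\infty,
\]
whereas Theorem \ref{main2} only assumes $g\in L^1(\R^N\setminus\Omega)$ and $\phi(g)\geq d^s$ near $\partial\Omega$, i.e.\ $g\leq\psi(\delta^s)$ near the boundary. Take $f(t)=t^p$ with $p\in(1+2s,\,1+2s/(1-s))$, so $\psi(v)\asymp v^{-2/(p-1)}$ and $g$ is allowed to be of order $\delta^{-2s/(p-1)}$ near $\partial\Omega$. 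Then $g\,\delta^{-s}\asymp\delta^{-2s/(p-1)-s}$ is \emph{not} integrable near $\partial\Omega$ precisely when $p\leq 1+2s/(1-s)$, i.e.\ throughout the admissible range. Since the Poisson kernel $P_\Omega(x,y)$ behaves like $\delta(y)^{-s}$ as $y\to\partial\Omega$ from outside, the putative $V(x)=\int_{\C\Omega}P_\Omega(x,y)g(y)\,dy$ is then $+\infty$ everywhere in $\Omega$. So $V$, and hence your supersolution $\bar u=u_0+V$, does not exist; the estimate $V\lesssim\psi(\delta^s)$ you flag as ``the hard part'' cannot even be posed.

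A second, smaller issue: you start from the solution $u_0$ of Theorem \ref{main1}, but Theorem \ref{main1} additionally needs hypothesis \eqref{E}, which Theorem \ref{main2} drops. As it happens $u_0$ is also unnecessary: the theorem only claims an \emph{upper} bound $u\leq\psi(c\,d^s)$, which a supersolution provides on its own, and $\underline u\equiv0$ is already an adequate subsolution.

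The paper's route avoids both problems by never invoking the linear Poisson theory for $g$. It constructs an explicit global supersolution $\bar u=\mu\,\psi(\delta^s)+\lambda\,\xi$ (Theorem \ref{U-gsuper}), whose non-trivial content is Proposition \ref{impo}: the direct computation that $\Ds\,\psi(\delta^s)\geq -Cf(\psi(\delta^s))$ near $\partial\Omega$. On the exterior, $\bar u=\mu\,\psi(\delta^s)\geq g$ by hypothesis \eqref{g2}. The solution is then obtained by a monotone scheme with truncated exterior data $g_k=\min\{k,g\}$ and vanishing singular trace (problem \eqref{gkproblem}); each $g_k$ is bounded, so each approximating linear problem is genuinely well posed in the $L^1$-framework, the $u_k$ increase and stay below $\bar u$ by the comparison principle, and the limit is the desired solution. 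If you want to salvage your idea, you would have to replace the ill-defined lift $V$ by this truncation-and-monotone-limit device, and replace the wished-for estimate on $V$ by the barrier furnished by Proposition \ref{impo}; at that point the argument essentially becomes the paper's.
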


\begin{rmk}\rm 
Mind that in problem \eqref{gproblem} we do not prescribe the singular trace at $\partial\Omega$.
\end{rmk}

\begin{rmk}\rm
The hypotheses in Theorem \ref{main1}, when considering $f(u)=u^p$, reduce to
\[
p\in\left(1+2s,1+\frac{2s}{1-s}\right),
\]
see Theorem \ref{power-th}.
Note that this range of exponents does {\it not}
converge, letting $s\uparrow 1$, to the set of admissible exponents 
for $-\lapl$, which is given by \eqref{ko} and simply reads as $p\in(1,+\infty)$.
Indeed, we only have $1+2s\to 3$ as $s\uparrow1$.
This is not discouraging though.
In this fractional setting we need $u\in L^1(\Omega)$
to make sense of the operator: this is an additional (natural) restriction we do not have
in the classical problem, so it is reasonable to get smaller 
ranges for $p$. Moreover, the classical solution to the large problem
is known to behave like (cf. equation \eqref{ddgr})
\[
u\ \asymp\ d^{-2/(p-1)}
\]
and such a $u$ is in $L^1(\Omega)$ when $p>3$.
In this sense we actually have the asymptotic convergence 
of the admissible ranges of exponents. Compare this also 
with Remark \ref{8888}.
\end{rmk}

\begin{rmk}\rm
As it will be clear in the following proofs,
hypothesis \eqref{tech} is technical and not structural.
We conjecture that it is not necessary to
establish existence results.
But let us mention how a similar assumption
arises naturally even in the classical framework
when dealing with the computation of the 
asymptotic behaviour of the solution:
see Bandle and Marcus \cite[equations (B) and (B)$'$]{bandle-marcus}.
\end{rmk}

The strategy to prove the existence result in Theorem \ref{main1}
is to build the sequence ${\{u_k\}}_{k\in\N}$ of solutions to\footnote{The operator
$E$ denotes the singular trace operator defined in \cite{a}.}
\begin{equation}\label{approx}
\left\lbrace\begin{aligned}
\Ds u_k &= -f(u_k) & & \hbox{ in }\Omega \\
u_k &= 0 & & \hbox{ in }\R^N\setminus\Omega \\
Eu_k &= k & & \hbox{ on }\partial\Omega
\end{aligned}\right.
\end{equation}
and then let $k\uparrow+\infty$. In case $u_k$ admits 
a limit, then we will need to prove that this is the 
solution we were looking for. This might also be called 
the {\it minimal large solution} by borrowing the expression 
used in the classical theory.

We can also provide a partial nonexistence result.
\begin{theo}\label{nonexist} Suppose there exist $a,b>0$ for which
\begin{equation}
f(t)\leq a+bt,\qquad \hbox{for any }t\in(0,+\infty).
\end{equation}
Then there exists $\alpha>0$
\[
u_k(x)\uparrow+\infty\quad\hbox{ as }k\uparrow+\infty,\qquad
\hbox{ whenever }d(x)<\alpha.
\]
\end{theo}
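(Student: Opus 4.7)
The plan is to bound $u_k$ from below by the solution of a linear comparison problem whose blow-up rate at the boundary can be made explicit.

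First, linearise. The $u_k$ are nonnegative (by the weak maximum principle, since the exterior datum is $0$ and $f(0)=0$), so using $f(t)\leq a+bt$ the equation in \eqref{approx} entails
\[
\Ds u_k + b\,u_k \;=\; b\,u_k - f(u_k) \;\geq\; -a \qquad\text{in } \Omega.
\]
Let $w_k$ be the unique weak solution of
\[
\Ds w_k + b\, w_k = -a \text{ in } \Omega,\quad w_k = 0 \text{ in } \R^N\setminus\Omega,\quad E w_k = k \text{ on } \partial\Omega.
\]
Since $b>0$, the operator $\Ds+bI$ inherits the weak $L^1$ maximum principle of \cite{a} as a bounded lower-order perturbation of $\Ds$; applied to $u_k-w_k$, which has vanishing singular trace and vanishing exterior data and satisfies $(\Ds+bI)(u_k-w_k)\geq 0$, it yields $u_k\geq w_k$ in $\Omega$.

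Second, decompose $w_k=k\,\mathcal{H}+\mathcal{P}$ by linearity, where $\mathcal{H}$ solves the homogeneous problem $(\Ds+bI)\mathcal{H}=0$ in $\Omega$, $\mathcal{H}=0$ outside, $E\mathcal{H}=1$ on $\partial\Omega$, and $\mathcal{P}$ solves the analogous problem with source $-a$ and zero singular trace; both functions are independent of $k$, and $\mathcal{P}$ is finite inside $\Omega$. The function $\mathcal{H}$ exhibits the same boundary asymptotic $\mathcal{H}(x)\asymp d(x)^{s-1}$ as $d(x)\downarrow 0$ as the $s$-harmonic function with singular trace $1$ treated in \cite{a}: the additive term $b\mathcal{H}$ is integrable against the weight $d^s$, so it can only contribute a subleading correction of order $d^s$, vanishing at $\partial\Omega$. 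Consequently there exist $\alpha,c_1>0$ such that $\mathcal{H}(x)\geq c_1\,d(x)^{s-1}$ whenever $d(x)<\alpha$.

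Combining the two steps, for every $x\in\Omega$ with $d(x)<\alpha$,
\[
u_k(x)\;\geq\;w_k(x)\;=\;k\,\mathcal{H}(x)+\mathcal{P}(x)\;\geq\;k\,c_1\,d(x)^{s-1}+\mathcal{P}(x)\;\longrightarrow\;+\infty
\]
as $k\uparrow\infty$, since $\mathcal{P}(x)$ is finite. Monotonicity of the sequence $\{u_k\}$ is a standard consequence of the comparison principle applied to \eqref{approx} with increasing trace data.

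The main technical obstacle is the transfer of the linear theory — well-posedness, weak maximum principle, and sharp boundary asymptotics — from $\Ds$ to $\Ds+bI$. Since $bI$ is a lower-order perturbation, this should be routine, but one must verify that the correction induced by $b\mathcal{H}$ does not swallow the leading $d^{s-1}$ singularity at $\partial\Omega$; a Neumann-series argument on a weighted space keyed to $d^{s-1}$ would make this precise.
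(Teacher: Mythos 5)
Your strategy (compare $u_k$ with solutions of the linear problem for the perturbed operator $\Ds+bI$) is sound in spirit, but it is genuinely different from the paper's route and, as you yourself flag, rests on a substantial piece of missing theory. The paper never introduces $\Ds+bI$; it works directly with the Green representation for $\Ds$. From $u_k(x)=k h_1(x)-\int_\Omega G_\Omega(x,y)f(u_k(y))\,dy$ (where $h_1$ is the $s$-harmonic function with singular trace $1$), it feeds in the two elementary bounds $f(u_k)\leq a+bu_k$ and $u_k\leq k h_1$ (the latter is immediate from the same representation since $f\geq0$) to obtain
\[
u_k(x)\ \geq\ k\Bigl(h_1(x)-b\int_\Omega G_\Omega(x,y)h_1(y)\,dy\Bigr)-a\,\xi(x),
\]
and then the single analytic input is that $h_1-b\int G_\Omega h_1>0$ near $\partial\Omega$, which is \cite[Proposition 3]{a}. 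That inequality is in fact precisely the two-term truncation of the Neumann series you gesture at; the paper simply does not need to sum the series or produce the fixed point $\mathcal{H}$. By contrast, your argument requires, for the operator $\Ds+bI$: (i) well-posedness of the Dirichlet problem with prescribed singular trace $k$ and $L^1$ data, (ii) a weak $L^1$ comparison principle giving $u_k\geq w_k$, and (iii) the sharp boundary asymptotic $\mathcal{H}\asymp d^{s-1}$. None of these are proved in \cite{a} for the perturbed operator; they are plausible, but transferring the whole theory is not as routine as you suggest, because the singular-trace data make the maximum principle and the representation formula nontrivial even in the unperturbed case. The paper's proof shows this machinery can be bypassed entirely. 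So: correct strategy, but more machinery than necessary, and the part you call ``the main technical obstacle'' is a real gap that the paper's more elementary argument avoids.
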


In the case of power-like nonlinearities we can show the following.
\begin{theo}\label{power-th} Let $f(t)=t^p,\ p>0$. Then
\begin{enumerate}
\item if $p\in\left[1+\frac{2s}{1-s},+\infty\right)$ then
the approximating sequence ${\{u_k\}}_{k\in\N}$ does not exist;
\item if $p\in\left(1+2s,1+\frac{2s}{1-s}\right)$ then the approximating
sequence converges to a solution $u$ of \eqref{problem} and
\[
u\ \leq\ C\,\delta^{-2s/(p-1)};
\]
\item if $p\in(1,1+s)$ then the approximating sequence exits $L^1(\Omega)$,
meaning $\|u_k\|_{L^1(\Omega)}\uparrow+\infty$ as $k\uparrow+\infty$.
\item if $p\in(0,1]$ then the approximating sequence blows-up uniformly
in some open strip near the boundary.
\end{enumerate}
\end{theo}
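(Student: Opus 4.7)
The backbone of the plan is to specialize the structural conditions \eqref{tech}, \eqref{L1-bis}, and \eqref{E} to $f(t)=t^p$: these translate respectively to $p>1$, $p>1+2s$, and $p<1+2s/(1-s)$. The four sub-ranges in the statement then correspond precisely to which combinations of these conditions hold. For part (2), namely $p\in(1+2s,\,1+2s/(1-s))$, all three hypotheses of Theorem \ref{main1} are simultaneously verified, giving existence of $u$ and the estimate $\phi(u)\geq c\,d^s$; an explicit computation with $f(t)=t^p$ yields $\phi(u)=\tfrac{2\sqrt{p+1}}{p-1}\,u^{-(p-1)/2}$, which inverts to the claimed upper bound $u\leq C\,d^{-2s/(p-1)}$. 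Part (4) is immediate from Theorem \ref{nonexist}, since $t^p\leq 1+t$ for every $t\geq 0$ whenever $p\in(0,1]$.

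For part (1), any candidate $u_k$ solving \eqref{approx} would need $\int_\Omega u_k^p\,d^s<+\infty$ for the weak $L^1$ framework of \cite{a} to make sense; but the singular trace condition $Eu_k=k$ forces $u_k(x)\gtrsim k\,d(x)^{s-1}$ near $\partial\Omega$, so that $\int_\Omega u_k^p\,d^s\gtrsim k^p\int d(x)^{p(s-1)+s}\,dx$, and this integral diverges exactly when $p(1-s)\geq 1+s$, i.e.\ $p\geq 1+2s/(1-s)$, ruling out the existence of $u_k$ in that range.

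Part (3) is the main obstacle and I would tackle it by contradiction: assuming $\{u_k\}$ is bounded in $L^1(\Omega)$, monotonicity in $k$ produces a pointwise limit $u^*\in L^1(\Omega)$, and one then has to show that this $u^*$ is a weak solution with singular trace $Eu^*\equiv+\infty$ satisfying a Keller-Osserman type lower bound $u^*(x)\geq c\,d(x)^{-2s/(p-1)}$ near $\partial\Omega$; since this profile is not integrable whenever $p\leq 1+2s$, this contradicts $u^*\in L^1$ and concludes the argument. The hard part is establishing the pointwise lower bound on $u^*$ in a regime where \eqref{L1-bis} fails and Theorem \ref{main1} is not directly available: the restriction $p<1+s$ is expected to be exactly the range in which an explicit subsolution $A\,d^{-2s/(p-1)}$ can be compared with $u_k$ via quantitative Green-function estimates for $G^s[h^p]$, where $h\asymp d^{s-1}$ is the normalized $s$-harmonic function with $Eh=1$ arising from the integral decomposition $u_k=k\,h-G^s[u_k^p]$ of \cite{a}.
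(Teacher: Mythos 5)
Your treatment of parts (1), (2) and (4) is essentially the paper's own: part (2) is a direct specialization of Theorem \ref{main1} with the same reduction of hypotheses \eqref{tech}, \eqref{L1-bis}, \eqref{E} to $p>1$, $p>1+2s$, $p<1+\frac{2s}{1-s}$; part (4) is the same one-line appeal to Theorem \ref{nonexist}; and part (1) is the same divergent-integral argument (the paper additionally rules out pointwise solutions via the Green representation, which you omit, but since the approximating sequence of \cite{a} is constructed as weak $L^1$ solutions your weak-framework argument already suffices for the stated claim).

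For part (3), however, your plan has a genuine gap, and you in fact flag it yourself. You want to assume $\{u_k\}$ is bounded in $L^1$, extract a pointwise limit $u^*$, show $u^*$ is a solution with $Eu^*\equiv+\infty$, and then contradict integrability by establishing a Keller--Osserman lower bound $u^*\gtrsim d^{-2s/(p-1)}$. None of those intermediate steps is carried out: passing $u^*$ to a pointwise solution requires a comparison or compactness argument that in Section \ref{proof-main} relies on the supersolution $\overline{u}$, which is unavailable when \eqref{L1-bis} fails; and the lower bound $u^*\gtrsim d^{-2s/(p-1)}$ is precisely the hard content you defer to ``quantitative Green-function estimates'' without providing them. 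Moreover, the restriction $p<1+s$ in the statement does not arise from a subsolution comparison as you conjecture. The paper's argument is much more elementary and direct: from $u_k=k\,h_1-G_\Omega[u_k^p]$ one has $u_k\leq k\,h_1$, hence $u_k^p=u_k^{p-s}u_k^{s}\leq k^{s}u_k^{p-s}h_1^{s}$, and integrating the Green representation over $\Omega$ (using the symmetry $\int_\Omega G_\Omega(x,y)\,dx=\xi(y)\asymp\delta(y)^s$) gives
\[
\int_\Omega u_k\ \geq\ k\int_\Omega h_1\ -\ Ck^{s}\int_\Omega u_k^{\,p-s}\,\delta^{s^2}.
\]
Here $p<1+s$ is used exactly so that $p-s<1$, allowing $u_k^{p-s}\lesssim u_k$ (for $u_k\geq1$), and since $\delta^{s^2}$ is bounded one concludes $(1+Ck^s)\int_\Omega u_k\gtrsim k\int_\Omega h_1$, hence $\|u_k\|_{L^1}\uparrow+\infty$. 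This bypasses entirely the construction of a boundary profile for $u^*$, which is the step your proposal cannot currently close.
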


%

\subsection{Notations}

In the following we will always denote by $\C E=\R^N\setminus E$
for any $E\subset\R^N$.

Hypothesis \eqref{tech} implies that $f(t)t^{-1-M}$ is monotone decreasing
and $f(t)t^{-1-m}$ is monotone increasing, since
\[
\frac{d}{dt}\frac{f(t)}{t^{1+M}}=\frac{1}{t^{1+M}}\left(f'(t)-(1+M)\frac{f(t)}{t}\right)\ \leq\ 0, 
\qquad
\frac{d}{dt}\frac{f(t)}{t^{1+m}}=\frac{1}{t^{1+m}}\left(f'(t)-(1+m)\frac{f(t)}{t}\right)\ \geq\ 0:
\]
we write this monotonicity conditions as
\begin{equation}\label{mono-f}
c^{1+m}f(t)\ \leq\ f(ct)\ \leq\ c^{1+M}f(t),\qquad c>1,\ t>0.
\end{equation}
The function $F$ satisfies two inequalities similar to \eqref{tech}:
\begin{equation}\label{Ftech}
2+m\ \leq\ \frac{t\,f(t)}{F(t)}\ \leq\ 2+M,
\end{equation}
indeed by integrating \eqref{tech} we deduce 
\[
(1+m)F(t)\leq\int_0^t\tau\,f'(\tau)\;d\tau=tf(t)-F(t).
\]
Let $\psi=\phi^{-1}$ be the inverse of $\phi$, so that
\begin{equation}\label{psi}
v\ =\ \int_{\psi(v)}^{+\infty}\frac{dt}{\sqrt{F(t)}},\qquad v\geq 0.
\end{equation}
The function $\psi$ is decreasing and $\psi(v)\uparrow+\infty$ as $v\downarrow 0$.
Moreover, by Remark \ref{phi-rmk} and \eqref{Ftech}, for $u>0$ and some $y\in(u,+\infty)$
\begin{equation*}
\frac{\phi(u)}{u|\phi'(u)|}=\frac{\sqrt{F(u)}}{u}\int_u^{+\infty}\frac{dt}{\sqrt{F(t)}}=
\frac{-\frac{1}{\sqrt{F(y)}}}{\frac{1}{\sqrt{F(y)}}-\frac{yf(y)}{2F(y)^{3/2}}}=
\frac{1}{\frac{yf(y)}{2F(y)}-1}\quad
\left\lbrace\begin{aligned}
& \geq \frac2M \\
& \leq \frac2m
\end{aligned}\right.
\end{equation*}
which in turn says that it holds, by setting $v=\phi(u)$,
\begin{equation}\label{psitech}
\frac2M\ \leq\ \frac{v|\psi'(v)|}{\psi(v)}\ \leq\ \frac2m,\qquad
\end{equation}
and one can prove also 
\begin{equation}\label{mono-psi}
\psi(cv)\ \leq\ c^{-2/M}\psi(v),\qquad c\in(0,1),\ v>0.
\end{equation}
as we have done for \eqref{mono-f} above. Also, by \eqref{Ftech} and \eqref{psitech},
\begin{equation}\label{mono-psi2}
\frac{v^2\,\psi''(v)}{\psi(v)}=\frac{v^2\,f(\psi(v))}{2\,\psi(v)}\asymp
\frac{v^2\,F(\psi(v))}{\psi(v)^2}=\frac{v^2\,\psi'(v)^2}{\psi(v)^2}\asymp 1.
\end{equation}

\subsection{Construction of a supersolution}

In this paragraph we prove the key point for the proof of Theorems 
\ref{main1} and \ref{main2}, that is we build a supersolution
to both problems by handling the function $U$ defined in \eqref{U} below.

Since by assumption $\partial\Omega\in C^2$, the function $\dist(x,\partial\Omega)$
is $C^2$ in an open strip around the boundary, except on $\partial\Omega$ itself.
Consider a positive function $\delta(x)$ which is obtained by
extending $\dist(x,\partial\Omega)$ smoothly to $\R^N\setminus\partial\Omega$.
Define 
\begin{equation}\label{U}
U(x)\ =\ \psi(\delta(x)^s),\qquad x\in\R^N.
\end{equation}

\begin{lem}\label{U-L1} 
The function $U$ defined in \eqref{U} is in $L^1(\Omega)$.
\end{lem}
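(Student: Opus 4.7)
The plan is to split $\Omega = \Omega_\eta \cup (\Omega \setminus \Omega_\eta)$ where $\Omega_\eta := \{x \in \Omega : \delta(x) < \eta\}$ for some small $\eta > 0$, and to treat the two pieces separately. On the interior part $\Omega \setminus \Omega_\eta$, the distance $\delta$ is bounded below by $\eta$, and since $\psi$ is monotone decreasing and finite on $(0,+\infty)$, the function $U$ is bounded there, hence trivially integrable (using that $\Omega$ is bounded).

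The main work is on the tubular part $\Omega_\eta$. Since $\partial\Omega \in C^2$, for $\eta$ small enough the normal-coordinate map gives a $C^1$ diffeomorphism $\Omega_\eta \simeq \partial\Omega \times (0, \eta)$ with Jacobian bounded above and below. Applying the coarea formula (or equivalently this change of variables),
\begin{equation*}
\int_{\Omega_\eta} \psi(\delta(x)^s) \, dx \ \asymp\ \int_0^\eta \psi(r^s) \, \mathcal{H}^{N-1}(\{\delta = r\} \cap \Omega) \, dr \ \asymp\ \int_0^\eta \psi(r^s)\, dr,
\end{equation*}
since the $(N-1)$-dimensional measure of the level sets is comparable to $|\partial\Omega|$ for small $r$. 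The problem is thus reduced to the one-dimensional estimate $\int_0^\eta \psi(r^s)\, dr < +\infty$.

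For the 1D estimate I would change variable by setting $t = \psi(r^s)$, i.e.\ $r = \phi(t)^{1/s}$, so that $dr = \tfrac{1}{s}\phi(t)^{1/s-1}\phi'(t)\, dt$. As $r \downarrow 0$ we have $t \uparrow +\infty$, and as $r = \eta$ we get $t = \psi(\eta^s) =: t_0$. Hence
\begin{equation*}
\int_0^\eta \psi(r^s)\, dr \ =\ \frac{1}{s}\int_{t_0}^{+\infty} t\, \phi(t)^{1/s-1}\,|\phi'(t)|\, dt.
\end{equation*}
Now I would plug in the asymptotic estimates from Remark \ref{phi-rmk}: $\phi(t) \asymp (t/f(t))^{1/2}$ and $|\phi'(t)| = 1/\sqrt{F(t)} \asymp 1/\sqrt{t\, f(t)}$ (using \eqref{Ftech}). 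A short computation gives
\begin{equation*}
t\,\phi(t)^{1/s - 1}\,|\phi'(t)| \ \asymp\ \left(\frac{t}{f(t)}\right)^{1/(2s)},
\end{equation*}
so finiteness reduces exactly to the equivalent form \eqref{L1-bis} of the standing hypothesis \eqref{L1}.

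No real obstacle is expected: the $C^2$ regularity of $\partial\Omega$ delivers the tubular structure for free, and the growth hypothesis \eqref{tech} was set up precisely so that the asymptotic equivalences in Remark \ref{phi-rmk} are available, making the change-of-variable computation completely routine. The only point to keep an eye on is picking $\eta$ small enough for both the tubular coordinates and for the smooth extension $\delta$ to agree with the true distance, and checking that the hidden constants in $\asymp$ are independent of $r$ on $(0,\eta)$.
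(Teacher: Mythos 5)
Your proof is correct and follows essentially the same route as the paper: restrict to a boundary strip, reduce via the coarea formula to a one-dimensional integral in the distance variable, apply the change of variables $t=\psi(r^s)$, and invoke the asymptotics of $\phi$ and $\phi'$ from Remark \ref{phi-rmk} to land on the form \eqref{L1-bis} of hypothesis \eqref{L1}. The only (immaterial) difference is that you spell out the interior-boundedness and the tubular-coordinate Jacobian a bit more explicitly than the paper does.
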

\begin{proof}
Since both $\psi$ and $\delta^s$ are continuous in $\Omega$, then $U\in L^1_{loc}(\Omega)$.
Fix $\delta_0>0$ small and consider $\Omega_0=\{x\in\Omega:\delta(x)<\delta_0\}$. We have
(using once the coarea formula)
\[
\int_{\Omega_0}\psi(\delta(x)^s)\;dx\ 
\leq\ C\int_0^{\delta_0}\psi(t^s)\;dt:
\]
apply now the transformation $\psi(t^s)=\eta$ to get
\[
\int_{\Omega_0}U(x)\;dx \ \leq \ C\int_{\eta_0}^{+\infty}\eta\,\phi(\eta)^{(1-s)/s}|\phi'(\eta)|\;d\eta 
\]
where, by Remark \ref{phi-rmk},
\[
|\phi'(\eta)|\asymp\frac1{\sqrt{\eta\,f(\eta)}}\qquad\hbox{and}\qquad\phi(\eta)\asymp\sqrt{\frac{\eta}{f(\eta)}},
\]
therefore
\[
\int_{\Omega_0}U(x)\;dx \ \leq \ C\int_{\eta_0}^{+\infty}\left(\frac{\eta}{f(\eta)}\right)^\frac1{2s}d\eta 
\]
which is finite by \eqref{L1-bis}.
\end{proof}

The following Proposition shows that $U$ is a good starting point 
to build a supersolution. The proof is technical but this is the key step
for the following.

\begin{prop}\label{impo}
The function $U$ defined in \eqref{U} satisfies for some $C,\,\delta_0>0$
\begin{equation}\label{impo-eq}
\Ds U \geq-Cf(U),\qquad \hbox{in }\Omega_{\delta_0}=\{x\in\Omega:\delta(x)<\delta_0\}.
\end{equation}
\end{prop}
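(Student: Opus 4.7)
The plan is to estimate $\Ds U(x)$ via a second-order Taylor expansion of $\psi$ centered at $\delta(x)^s$. Writing
\[
\psi(\delta(y)^s) = \psi(\delta(x)^s) + \psi'(\delta(x)^s)\bigl(\delta(y)^s - \delta(x)^s\bigr) + \tfrac12\psi''(\xi)\bigl(\delta(y)^s - \delta(x)^s\bigr)^2
\]
with $\xi = \xi(x,y)$ between $\delta(x)^s$ and $\delta(y)^s$, and inserting into \eqref{fr-lapl}, I get the identity
\[
\Ds U(x) \;=\; \psi'(\delta(x)^s)\,\Ds(\delta^s)(x) \;-\; \frac{\A(N,s)}{2}\int_{\R^N} \frac{\psi''(\xi)\bigl(\delta(y)^s - \delta(x)^s\bigr)^2}{|x-y|^{N+2s}}\,dy.
\]
Since $\psi''(v) = f(\psi(v))/2 > 0$, the remainder integral is non-negative and enters with a minus sign, so the task is to bound each summand from below by $-C f(U(x))$.

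For the first summand I would use that $\Ds(\delta^s)$ is bounded on $\Omega_{\delta_0}$ for $C^2$ domains---a classical consequence of the half-space identity $\Ds((x_N)_+^s) \equiv 0$ in $\{x_N>0\}$ combined with a local flattening of $\partial\Omega$. Together with $|\psi'(v)| \asymp \psi(v)/v$ from \eqref{psitech} and $f(U(x)) \asymp U(x)/\delta(x)^{2s}$ from \eqref{mono-psi2}, this yields
\[
|\psi'(\delta(x)^s)\,\Ds(\delta^s)(x)| \;\leq\; C\,\frac{U(x)}{\delta(x)^s} \;\asymp\; C\,\delta(x)^s f(U(x)) \;\leq\; C f(U(x))
\]
as soon as $\delta_0 \leq 1$.

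I would attack the second summand by splitting the integration domain into the near region $B_{\delta(x)/2}(x)$ and its complement. On the near region one has $\delta(y) \asymp \delta(x)$, hence $\psi''(\xi) \asymp f(U(x))$, while the mean value theorem applied to $t \mapsto t^s$ on $[\delta(x)/2,\, 3\delta(x)/2]$ gives $|\delta(y)^s - \delta(x)^s| \leq s\delta(x)^{s-1}|y-x|$; combining these and integrating in polar coordinates shows the near piece is bounded by $C f(U(x))$, using that $s<1$ makes $r^{1-2s}$ integrable at the origin.

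The main obstacle is the far region, where $\delta(y)$ may be much smaller than $\delta(x)$ (when $y$ sits near a distant portion of $\partial\Omega$), rendering $\psi''(\xi)$ possibly unbounded. I would exploit the monotonicity of $\psi''$---which is decreasing in $v$, since $\psi$ decreases and $f$ increases---to obtain $\psi''(\xi) \leq \psi''(\min(\delta(x),\delta(y))^s)$, and then split further according to whether $\delta(y) \geq \delta(x)$ or $\delta(y) < \delta(x)$. The subregion $\{\delta(y) \geq \delta(x)\}$ is straightforward, because $\psi''(\xi) \leq f(U(x))/2$ and the tail of the kernel integrates to order $\delta(x)^{-2s}$ once one controls the bounded factor $(\delta(y)^s - \delta(x)^s)^2$ crudely by $2(\delta(x)^{2s}+\delta(y)^{2s})$. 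The subregion $\{\delta(y) < \delta(x)\}$ is the genuinely technical piece: here one must balance the singularity of $f(U(y)) \asymp \psi''(\delta(y)^s)$ near $\partial\Omega$ against the decay of $|x-y|^{-N-2s}$, and this is precisely where hypothesis \eqref{L1}---in its equivalent form \eqref{L1-bis}---is expected to enter, via a quantitative $L^1$-type estimate on $f(U)$ in tubular neighborhoods of $\partial\Omega$, in the spirit of Lemma \ref{U-L1}. Summing the three contributions yields \eqref{impo-eq}.
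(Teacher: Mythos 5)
Your strategy --- a \emph{global} second-order Taylor expansion of $\psi$ at $\delta(x)^s$, isolating $\psi'(\delta(x)^s)\Ds(\delta^s)(x)$ from a quadratic remainder --- is genuinely different from the paper's, which never Taylor-expands globally: the paper splits the \emph{integration domain} into $\Omega_1=\{\delta(y)>\tfrac{3}{2}\delta(x)\}$, $\Omega_2=\{\tfrac{1}{2}\delta(x)\le\delta(y)\le\tfrac{3}{2}\delta(x)\}$, $\Omega_3=\{\delta(y)<\tfrac{1}{2}\delta(x)\}$ and $\C\Omega$, discards the $\Omega_1$ contribution (which is positive since $\psi$ is decreasing), Taylor-expands $\psi\circ\delta^s$ only on the near region $\Omega_2\cap\omega$, and estimates $\Omega_3\cap\omega$ and the exterior directly using \eqref{psitech}, \eqref{mono-psi2} and the tubular-neighborhood estimate \eqref{03}. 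Your version has a fatal gap and a lesser one. The lesser one: since the paper's $\delta$ is a \emph{positive two-sided} extension of $\dist(\cdot,\partial\Omega)$ across $\partial\Omega$, the half-space model for $\delta^s$ is $|x_N|^s$, not $(x_N)_+^s$, and $\Ds(|x_N|^s)(x)=-c\,x_N^{-s}$ for $x_N>0$ with $c>0$ (split into $\tau\gtrless0$: the $\tau>0$ part vanishes by the $(x_N)_+^s$ identity, and the $\tau<0$ part scales like $x_N^{-s}$). So $\Ds(\delta^s)$ is \emph{not} bounded near $\partial\Omega$, and there is no ``classical fact'' to cite. The conclusion for your linear term nevertheless holds, but only because $\psi'<0$ makes $\psi'(\delta(x)^s)\Ds(\delta^s)(x)$ a positive quantity of size $\asymp|\psi'(\delta(x)^s)|\delta(x)^{-s}\asymp\psi(\delta(x)^s)/\delta(x)^{2s}\asymp f(U(x))$; it does not follow from the argument you gave.

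The fatal gap is the subregion $\{\delta(y)\ge\delta(x),\ |y-x|\ge\delta(x)/2\}$, which you call ``straightforward.'' Your chain of estimates --- $\psi''(\xi)\le f(U(x))/2$, then $(\delta(y)^s-\delta(x)^s)^2\le 2\delta(x)^{2s}+2\delta(y)^{2s}\lesssim 1$, then $\int_{|y-x|\ge\delta(x)/2}|y-x|^{-N-2s}\,dy\asymp\delta(x)^{-2s}$ --- yields a bound of order $f(U(x))\,\delta(x)^{-2s}$, which is \emph{not} $\lesssim f(U(x))$: it is too large by the divergent factor $\delta(x)^{-2s}$. The root cause is that the Lagrange bound $\psi''(\xi)\le\psi''(\delta(x)^s)$ is far too lossy when $\delta(y)\gg\delta(x)$, since the intermediate point $\xi$ then sits near the larger endpoint $\delta(y)^s$, where $\psi''$ is much smaller. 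Repairing your argument would require the integral-form remainder bound $\int_{\delta(x)^s}^{\delta(y)^s}(\delta(y)^s-t)\psi''(t)\,dt\le(\delta(y)^s-\delta(x)^s)\bigl(\psi'(\delta(y)^s)-\psi'(\delta(x)^s)\bigr)\le(\delta(y)^s-\delta(x)^s)\,|\psi'(\delta(x)^s)|$, paired with $\delta(y)^s-\delta(x)^s\le|y-x|^s$, which then produces $|\psi'(\delta(x)^s)|\,\delta(x)^{-s}\asymp f(U(x))$; the paper sidesteps the whole issue by observing that the corresponding region $\Omega_1$ contributes a favorable sign and can simply be dropped. Your $\{\delta(y)<\delta(x)\}$ discussion is also too sketchy to check, though the ingredient you anticipate --- a tubular-neighborhood estimate on $\psi(\delta^s)$, cf.\ \eqref{03} --- is indeed the one the paper uses.
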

Before giving the proof, we prove a preliminary lemma.
\begin{lem} Let $\Omega\subset\R^N$ a bounded open domain
with compact boundary $\partial\Omega$. Cover $\partial\Omega$
by a finite number of open portions $\Gamma_j\subset\partial\Omega$,
$j=1,\ldots,n$. For any $\eta\in\partial\Omega$ there is some
$i(\eta)\in\{1,\ldots,n\}$ such that $\eta\in\Gamma_{i(\eta)}$ for which
\begin{equation}\label{00}
\dist(\eta,\partial\Omega\setminus\Gamma_{i(\eta)})\geq c
\end{equation}
for some constant $c>0$ independent of $\eta\in\partial\Omega$.
\end{lem}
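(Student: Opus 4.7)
The plan is to recognize this as essentially a Lebesgue number style statement for the compact metric space $\partial\Omega$ equipped with the finite open cover $\{\Gamma_j\}_{j=1}^n$. The natural object to introduce is, for each $\eta \in \partial\Omega$, the function
\[
R(\eta)\ :=\ \max_{j=1,\ldots,n}\,\dist(\eta,\partial\Omega\setminus\Gamma_j),
\]
where distances are measured in $\R^N$. The key observation is that an index $j$ realizes a \emph{strictly positive} value $\dist(\eta,\partial\Omega\setminus\Gamma_j)>0$ if and only if $\eta\in\Gamma_j$: indeed, since $\Gamma_j$ is open in $\partial\Omega$ and $\partial\Omega$ is closed in $\R^N$, the set $\partial\Omega\setminus\Gamma_j$ is closed in $\R^N$, so the distance to it vanishes exactly on this closed set. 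Consequently $R(\eta)>0$ for every $\eta\in\partial\Omega$, because $\{\Gamma_j\}$ covers $\partial\Omega$, and any index $i(\eta)$ attaining the maximum automatically satisfies $\eta\in\Gamma_{i(\eta)}$.

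Next I would argue that $R$ is continuous on $\partial\Omega$: each map $\eta\mapsto\dist(\eta,\partial\Omega\setminus\Gamma_j)$ is $1$-Lipschitz, and the maximum of finitely many continuous functions is continuous. Since $\partial\Omega$ is compact, $R$ attains its infimum, and by the previous paragraph this infimum is strictly positive; call it $c$. Then for every $\eta\in\partial\Omega$, choosing $i(\eta)$ to realize the maximum in the definition of $R(\eta)$ yields simultaneously $\eta\in\Gamma_{i(\eta)}$ and
\[
\dist(\eta,\partial\Omega\setminus\Gamma_{i(\eta)})\ =\ R(\eta)\ \geq\ c,
\]
which is exactly \eqref{00}.

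There is no real obstacle here; the only point that deserves attention is checking that a maximizing index $j$ genuinely places $\eta$ inside $\Gamma_j$ (so the statement is not vacuous), which is handled by the opennes of $\Gamma_j$ in $\partial\Omega$ as explained above. The rest is compactness plus the standard fact that the maximum of finitely many Lipschitz functions is continuous.
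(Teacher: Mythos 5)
Your proof is correct and is essentially the same argument the paper gives: you both introduce the function $\eta\mapsto\max_j\dist(\eta,\partial\Omega\setminus\Gamma_j)$, observe it is continuous (max of finitely many Lipschitz distance functions), use compactness of $\partial\Omega$ to obtain a positive minimum, and note that a maximizing index forces $\eta$ to lie in the corresponding $\Gamma_j$. The only difference is cosmetic: you make explicit the equivalence $\dist(\eta,\partial\Omega\setminus\Gamma_j)>0\iff\eta\in\Gamma_j$ (which also cleanly justifies that the chosen $i(\eta)$ satisfies $\eta\in\Gamma_{i(\eta)}$), whereas the paper leaves that point implicit.
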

\begin{proof}
For any $j=1,\ldots,n$, the function $\eta\mapsto\dist(\eta,\partial\Omega\setminus\Gamma_j)$ is continuous in $\partial\Omega$
and so is $\eta\mapsto\max_j\dist(\eta,\partial\Omega\setminus\Gamma_j)$: there is a point $\eta_*\in\partial\Omega$
where $\eta\mapsto\max_j\dist(\eta,\partial\Omega\setminus\Gamma_j)$ attains its minimum. 
Such a minimum cannot be $0$ because $\eta_*$ belongs at least to one of the $\Gamma_j$.
This implies that for any $\eta\in\partial\Omega$ there exists
$i(\eta)\in\{1,\ldots,n\}$ such that
\[
\max_j\dist(\eta_*,\partial\Omega\setminus\Gamma_j)
\leq \max_j\dist(\eta,\partial\Omega\setminus\Gamma_j)=
\dist(\eta,\partial\Omega\setminus\Gamma_{i(\eta)}).
\]
\end{proof}

\begin{proof}{\it of Proposition \ref{impo}.}
We start by writing, for $x\in\Omega$
\begin{equation}\label{split}
\frac{\Ds U(x)}{\A(N,s)}=PV\int_\Omega\frac{\psi(\delta(x)^s)-\psi(\delta(y)^s)}{|x-y|^{N+2s}}\:dy
+\int_{\C\Omega}\frac{\psi(\delta(x)^s)-\psi(\delta(y)^s)}{|x-y|^{N+2s}}\:dy.
\end{equation}
Let us begin with an estimate for 
\[
PV\int_\Omega\frac{\psi(\delta(x)^s)-\psi(\delta(y)^s)}{|x-y|^{N+2s}}\:dy.
\]
Split the integral into
\[
\int_{\Omega_1}\frac{\psi(\delta(x)^s)-\psi(\delta(y)^s)}{|x-y|^{N+2s}}\:dy
+PV\int_{\Omega_2}\frac{\psi(\delta(x)^s)-\psi(\delta(y)^s)}{|x-y|^{N+2s}}\:dy+
\int_{\Omega_3}\frac{\psi(\delta(x)^s)-\psi(\delta(y)^s)}{|x-y|^{N+2s}}\:dy
\]
where we have set
\begin{align*}
\Omega=\Omega_1\cup\Omega_2\cup\Omega_3,\text{ with: } 
 & \Omega_1=\left\{y\in\Omega:\delta(y)>\frac32\delta(x)\right\} \\
 & \Omega_2=\left\{y\in\Omega:\frac12\delta(x)\leq\delta(y)\leq\frac32\delta(x)\right\} \\
 & \Omega_3=\left\{y\in\Omega:\delta(y)<\frac12\delta(x)\right\}.
\end{align*}
In $\Omega_1$ we have in particular $\delta(y)>\delta(x)$ so that,
since $\psi$ decreasing function, the first integral contributes by a positive quantity.
Now, let us turn to integrals on $\Omega_2$ and $\Omega_3$.
Set $x=\theta+\delta(x)\grad\delta(x),\ \theta\in\partial\Omega$:
up to a rotation and a translation,
we can suppose that $\theta=0$ and $\grad\delta(x)=e_N$.

Let ${\{\Gamma_j\}}_{j=1}^n$ be a finite open covering of $\partial\Omega$ and 
let $\Gamma:=\Gamma_{i(0)}$ (in the notations of the last lemma)
be a neighbourhood of $0$ on $\partial\Omega$ chosen from $\{\Gamma_j\}_{j=1}^n$ and
for which \eqref{00} is fulfilled. Let also
\[
\omega=\{y\in\R^N:y=\eta+\delta(y)\grad\delta(y),\ \eta\in\Gamma\}. 
\]
The set $\Gamma\subset\partial\Omega$ can be described via as the graph of a $C^2$ function
\begin{eqnarray*}
\gamma:B'_r(0)\subseteq\R^{N-1} & \longrightarrow & \R \\
\eta' & \longmapsto & \gamma(\eta')\quad\text{ s.t }\eta=(\eta',\gamma(\eta'))\in\Gamma
\end{eqnarray*}
satisfying $\gamma(0)=|\grad\gamma(0)|=0$.

The integration on $(\Omega_2\cup\Omega_3)\setminus\omega$
is lower order with respect to the one on $(\Omega_2\cup\Omega_3)\cap\omega$
since in the latter we have the singularity in $x$ to deal with,
while in the former $|x-y|$ is a quantity bounded below independently on $x$.
Indeed when $y\in(\Omega_2\cup\Omega_3)\setminus\omega$
\[
|x-y|\geq|\eta+\delta(y)\grad\delta(y)|-\delta(x)\geq|\eta|-\delta(y)-\delta(x)\geq\dist(0,\partial\Omega\setminus\Gamma)-\frac{5}{2}\delta(x)
\]
where $\delta(x)$ is small and the first addend is bounded uniformly in $x$ by \eqref{00}.

We are left with:
\[
C\cdot PV\int_{\Omega_2\cap\omega}\frac{\psi(\delta(x)^s)-\psi(\delta(y)^s)}{|x-y|^{N+2s}}\:dy+
C\int_{\Omega_3\cap\omega}\frac{\psi(\delta(x)^s)-\psi(\delta(y)^s)}{|x-y|^{N+2s}}\:dy.
\]
Let us split the remainder of the estimate in steps.

{\it First step: the distance between $x$ and $y$.} We claim that there exists $c>0$ such that
\begin{equation}\label{distxy}
\begin{aligned}
& |x-y|^2\ \geq\ c\left(|\delta(x)-\delta(y)|^2+|\eta'|^2\right),\quad y\in(\Omega_2\cup\Omega_3)\cap\omega,\\ 
& y=\eta+\delta(y)\grad\delta(y),\ \eta=(\eta',\gamma(\eta')).
\end{aligned}
\end{equation}
Since in our set of coordinates $x=\delta(x)e_N$, we can write
\begin{multline*}
|x-y|^2=|\delta(x)e_N-\delta(y)e_N+\delta(y)e_N-y_Ne_N-y'|^2\ \geq \\
\geq\ |\delta(x)-\delta(y)|^2-2|\delta(x)-\delta(y)|\cdot|\delta(y)-y_N|+|\delta(y)-y_N|^2+|y'|^2.
\end{multline*}
We concentrate our attention on $|\delta(y)-y_N|$: the idea is to show
that this is a small quantity; indeed, in the particular case when $\Gamma$ 
lies on the hyperplane $y_N=0$, this quantity is actually zero. As in the definition
of $\omega$, we let $y=\eta+\delta(y)\grad\delta(y)$ and $\eta=(\eta',\gamma(\eta'))\in\Gamma$:
thus $y_N=\gamma(\eta')+\delta(y)\langle\grad\delta(y),e_N\rangle$ where 
$\grad\delta(y)$ is the inward unit normal to $\partial\Omega$ at the point $\eta$, so that
\[
\grad\delta(y)=\frac{(-\grad\gamma(\eta'),1)}{\sqrt{|\grad\gamma(\eta')|^2+1}}
\]
and
\begin{equation}\label{yeta}
y'=\eta'-\frac{\delta(y)\,\grad\gamma(\eta')}{\sqrt{|\grad\gamma(\eta')|^2+1}},
\qquad y_N=\gamma(\eta')+\frac{\delta(y)}{\sqrt{|\grad\gamma(\eta')|^2+1}}.
\end{equation}
Now, since $y\in\Omega_2\cup\Omega_3$, it holds $|\delta(x)-\delta(y)|\leq\delta(x)$ and
\[
|\delta(y)-y_N|\leq |\gamma(\eta')|+\delta(y)\left(1-\frac{1}{\sqrt{|\grad\gamma(\eta')|^2+1}}\right)
\leq C|\eta'|^2+2C\delta(x)|\eta'|^2
\]
where, in this case, $C=\|\gamma\|_{C^2(B_r)}$ depends only on the geometry of $\partial\Omega$ and not on $x$.
By \eqref{yeta}, we have 
\[
|\eta'|^2\leq 2|y'|^2+\frac{2\delta(y)^2\,|\grad\gamma(\eta')|^2}{|\grad\gamma(\eta')|^2+1}
\leq 2|y'|^2+2C\delta(y)^2\,|\eta'|^2\leq 2|y'|^2+C\delta(x)^2\,|\eta'|^2,
\]
so that $|\eta'|^2\leq C|y'|^2$ when $\delta(x)$ is small enough.
Finally
\begin{multline*}
|x-y|^2\geq |\delta(x)-\delta(y)|^2+|y'|^2-2|\delta(x)-\delta(y)|\cdot|\delta(y)-y_N|\ \geq \\
\geq\ |\delta(x)-\delta(y)|^2+c|\eta'|^2-2C\delta(x)|\eta'|^2,
\end{multline*}
where, again, $C=\|\gamma\|_{C^2(B_r)}$ and \eqref{distxy} is proved provided $x$ is close enough to $\partial\Omega$.

{\it Second step: integration on $\Omega_2\cap\omega$.} Using the regularity of $\psi$ and $\delta$ we write
\[
\psi(\delta(x)^s)-\psi(\delta(y)^s)\geq \grad(\psi\circ\delta^s)(x)\cdot(x-y)
-\|D^2(\psi\circ\delta^s)\|_{L^\infty(\Omega_2\cap\omega)}|x-y|^2
\]
where
\[
D^2(\psi\circ\delta^s)=\frac{s\psi'(\delta^s)}{\delta^{1-s}}\,D^2\delta
+\frac{s^2\,\psi''(\delta^s)}{\delta^{2-2s}}\,\grad\delta\otimes\grad\delta
+\frac{s(s-1)\,\psi'(\delta^s)}{\delta^{2-s}}\,\grad\delta\otimes\grad\delta
\]
so that 
\[
\|D^2(\psi\circ\delta^s)\|_{L^\infty(\Omega_2\cap\omega)}\ \leq\ 
C\left\|\frac{\psi'(\delta^s)}{\delta^{1-s}}\right\|_{L^\infty(\Omega_2\cap\omega)}
+C\left\|\frac{\psi''(\delta^s)}{\delta^{2-2s}}\right\|_{L^\infty(\Omega_2\cap\omega)}
+C\left\|\frac{\psi'(\delta^s)}{\delta^{2-s}}\right\|_{L^\infty(\Omega_2\cap\omega)}.
\]
By definition of $\Omega_2$ and by \eqref{mono-psi} we can control the sup-norm by 
the value at $x$:
\begin{multline*}
\|D^2(\psi\circ\delta^s)\|_{L^\infty(\Omega_2\cap\omega)}\leq
C\,\frac{|\psi'(\delta(x)^s)|}{\delta(x)^{1-s}}
+C\,\frac{\psi''(\delta(x)^s)}{\delta(x)^{2-2s}}
+C\,\frac{|\psi'(\delta(x)^s)|}{\delta(x)^{2-s}}\ \leq \\
\leq\ C\,\frac{\psi''(\delta(x)^s)}{\delta(x)^{2-2s}}
+C\,\frac{|\psi'(\delta(x)^s)|}{\delta(x)^{2-s}}
\end{multline*}
and using equation \eqref{mono-psi2} we finally get 
\[
\|D^2(\psi\circ\delta^s)\|_{L^\infty(\Omega_2\cap\omega)}\leq 
C\,\frac{\psi''(\delta(x)^s)}{\delta(x)^{2-2s}}.
\]
If we now retrieve the whole integral and exploit \eqref{distxy}
\begin{multline*}
PV\int_{\Omega_2\cap\omega}\frac{\psi(\delta(x)^s)-\psi(\delta(y)^s)}{|x-y|^{N+2s}}\:dy\ \geq\ 
-C \frac{\psi''(\delta(x)^s)}{\delta(x)^{2-2s}}
\int_{\Omega_2\cap\omega}\frac{dy}{|x-y|^{N+2s-2}}\ \geq\\
\geq\ -C \frac{\psi''(\delta(x)^s)}{\delta(x)^{2-2s}}
\int_{\Omega_2\cap\omega}\frac{dy}{\left(|\delta(x)-\delta(y)|^2+|\eta|^2\right)^{(N+2s-2)/2}}.
\end{multline*}
We focus our attention on the integral on the right-hand side: by the coarea formula
\begin{align*}
& \int_{\Omega_2\cap\omega}\frac{dy}{\left(|\delta(x)-\delta(y)|^2+|\eta|^2\right)^{(N+2s-2)/2}}\ = \\
& =\ \int_{\delta(x)/2}^{3\delta(x)/2}dt \int_{\{\delta(y)=t\}\cap\omega}
\frac{d\sigma(\eta)}{\left(|\delta(x)-t|^2+|\eta|^2\right)^{(N+2s-2)/2}}\\
& \leq\ C\int_{\delta(x)/2}^{3\delta(x)/2}dt \int_{B_r}\frac{d\eta'}{\left(|\delta(x)-t|^2+|\eta'|^2\right)^{(N+2s-2)/2}} \\
& \leq\ C\int_{\delta(x)/2}^{3\delta(x)/2}dt \int_0^r\frac{\rho^{N-2}}{\left(|\delta(x)-t|^2+\rho^2\right)^{(N+2s-2)/2}}\;d\rho \\
& \leq\ C\int_{\delta(x)/2}^{3\delta(x)/2}dt \int_0^r\frac{\rho}{\left(|\delta(x)-t|^2+\rho^2\right)^{(2s+1)/2}}d\rho\ \leq  
\ C\int_{\delta(x)/2}^{3\delta(x)/2}\frac{dt}{|t-\delta(x)|^{2s-1}}.
\end{align*} 
We can retrieve now the chain of inequalities we stopped above:
\[
\int_{\Omega_3\cap\omega}\frac{\psi(\delta(x)^s)-\psi(\delta(y)^s)}{|x-y|^{N+2s}}\:dy\ \geq\ 
-C\frac{\psi''(\delta(x)^s)}{\delta(x)^{2-2s}}
\int_{\delta(x)/2}^{3\delta(x)/2}\frac{dt}{|\delta(x)-t|^{-1+2s}}
\geq\ -C\,\psi''(\delta(x)^s).
\]

{\it Third step: integration on $\Omega_3\cap\omega$.}
We use \eqref{distxy} once again:
\begin{align*}
& \int_{\Omega_3\cap\omega}\frac{\psi(\delta(x)^s)-\psi(\delta(y)^s)}{|x-y|^{N+2s}}\:dy\ \geq\\
& \geq\ -\int_{\Omega_3\cap\omega}\frac{\psi(\delta(y)^s)}{|x-y|^{N+2s}}\:dy\ 
\geq\ -C\int_{\Omega_3\cap\omega}\frac{\psi(\delta(y)^s)}{\left(|\delta(x)-\delta(y)|^2+|\eta'|^2\right)^\frac{N+2s}2}\:dy\\
& \geq\ -C\int_0^{\delta(x)/2}\frac{\psi(t^s)}{(\delta(x)-t)^{1+2s}}\;dt\ \geq
\ -\frac{C}{\delta(x)^{1+2s}}\int_0^{\delta(x)/2}\psi(t^s)\;dt.
\end{align*}
The term we have obtained is of the same order of $\delta(x)^{-2s}\psi(\delta(x)^s)$, by \eqref{psitech}:
\[
\int_0^{\delta(x)/2}\psi(t^s)\;dt\asymp\int_0^{\delta(x)/2}t^s\psi'(t^s)\;dt=
\frac{\delta(x)}{2s}\psi\left(\frac{\delta(x)^s}{2^s}\right)-\frac1s\int_0^{\delta(x)/2}\psi(t^s)\;dt
\]
so that
\begin{equation}\label{03}
\int_0^{\delta(x)/2}\psi(t^s)\;dt\asymp\delta(x)\psi(\delta(x)^s)=\delta(x)^{1+2s}\cdot\frac{\psi(\delta(x)^s)}{\delta(x)^{2s}}.
\end{equation}
Recall now that $\psi(\delta(x)^s)\delta(x)^{-2s}$ is in turn of the same size of $\psi''(\delta(x)^s)$ by \eqref{mono-psi2}.

{\it Fourth step: the outside integral in \eqref{split}.} 
We focus now our attention on
\[
\int_{\C\Omega}\frac{\psi(\delta(y)^s)-\psi(\delta(x)^s)}{|x-y|^{N+2s}}\;dy.
\]
First, by using the monotonicity of $\psi$, we write
\begin{multline*}
\int_{\C\Omega}\frac{\psi(\delta(y)^s)-\psi(\delta(x)^s)}{|x-y|^{N+2s}}\;dy\ \leq\ 
\int_{\{y\in\C\Omega:\delta(y)<\delta(x)\}\cap\omega}\frac{\psi(\delta(y)^s)-\psi(\delta(x)^s)}{|x-y|^{N+2s}}\;dy\ +\\
+\ \int_{\{y\in\C\Omega:\delta(y)<\delta(x)\}\setminus\omega}\frac{\psi(\delta(y)^s)-\psi(\delta(x)^s)}{|x-y|^{N+2s}}\;dy
\end{multline*}
The second integral gives
\[
\int_{\{y\in\C\Omega:\delta(y)<\delta(x)\}\setminus\omega}\frac{\psi(\delta(y)^s)-\psi(\delta(x)^s)}{|x-y|^{N+2s}}\;dy
\ \leq\ C\|\psi(\delta^s)\|_{L^1(\R^N)}
\]
because the distance between $x$ and $y$ is bounded there.
Again we point out that
\begin{align*}
& \int_{\{\delta(y)<\delta(x)\}\cap\omega}\frac{\psi(\delta(y)^s)-\psi(\delta(x)^s)}{|x-y|^{N+2s}}\;dy
\leq C\int_0^{\delta(x)}\frac{\psi(t^s)-\psi(\delta(x)^s)}{|\delta(x)+t|^{1+2s}}\;dt\ \leq \\
& \leq\ C\int_0^{\delta(x)/2}\frac{\psi(t^s)}{|\delta(x)+t|^{1+2s}}\;dt
+C\int_{\delta(x)/2}^{\delta(x)}\frac{\psi(t^s)}{|\delta(x)+t|^{1+2s}}\;dt \\
& \leq\ C\delta(x)^{-1-2s}\int_0^{\delta(x)/2}\psi(t^s)\;dt
+C\psi\left(\frac{\delta(x)^s}{2^s}\right)\int_{\delta(x)/2}^{\delta(x)}(\delta(x)+t)^{-1-2s}\;dt \\
& \leq\ C\delta(x)^{-1-2s}\int_0^{\delta(x)}\psi(t^s)\;dt
+C\psi(\delta(x)^s)\delta(x)^{-2s}
\end{align*}
which is of the order of $\psi''(\delta(x)^s)$, by \eqref{03} and \eqref{mono-psi2}.
\smallskip

{\it Conclusion.} We have proved that for $\delta(x)$ sufficiently small
\[
\Ds U(x)\geq -C\psi''(\delta(x)^s).
\]
Recall now that $\psi''(\delta^s)=f(\psi\circ\delta^s)$ and $U=\psi\circ\delta^s$ in $\Omega$,
so that
\[
\Ds U\ \geq\ -Cf(U)
\]
holds in a neighbourhood of $\partial\Omega$.
\end{proof}

Starting from $U$, it is possible to build
a full supersolution in view of the following lemma.
\begin{lem}\label{mod-supersol} Let $v:\R^N\to\R$ a function which satisfies
$\Ds v\in C(\Omega)$.
If there exist $C,\,\delta_0>0$ such that
\[
\Ds v \ \geq\ -Cf(v) \qquad \hbox{ in }\Omega_{\delta_0}:=\{x\in\Omega:\delta(x)<\delta_0\}
\]
then there exists $\overline{u}\geq v$ such that $\Ds\overline{u}\geq-f(\overline{u})$ throughout $\Omega$. 
\end{lem}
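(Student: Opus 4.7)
\emph{Plan.} The natural ansatz is to look for $\overline{u}$ in the form $\overline{u}=\lambda v+\mu$ with two constants $\lambda\geq 1$ and $\mu\geq 0$ to be chosen. Since the fractional Laplacian annihilates constants, this gives $\Ds\overline{u}=\lambda\Ds v$; and provided $v\geq 0$ in $\R^N$ (which is the case of interest, since $f(v)$ must be defined in the hypothesis), the comparison $\overline{u}\geq v$ is automatic. I would then split $\Omega$ into the boundary strip $\Omega_{\delta_0}$, where the hypothesis applies, and the interior piece $K:=\{x\in\Omega:\delta(x)\geq\delta_0\}$, which is compactly contained in $\Omega$, and fix $\lambda$ and $\mu$ so as to handle each region separately.

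On $\Omega_{\delta_0}$ I would chain the hypothesis $\Ds v\geq -Cf(v)$ with the monotonicity of $f$ and the scaling estimate \eqref{mono-f}, $f(\lambda v)\geq\lambda^{1+m}f(v)$, obtaining
\[
\Ds\overline{u}+f(\overline{u})=\lambda\Ds v+f(\lambda v+\mu)\geq -\lambda Cf(v)+f(\lambda v)\geq\lambda\bigl(\lambda^m-C\bigr)f(v),
\]
which is nonnegative as soon as $\lambda^m\geq C$. On the interior piece $K$, the hypothesis $\Ds v\in C(\Omega)$ yields a uniform bound $\|\Ds v\|_{L^\infty(K)}\leq M_0$, and then
\[
\Ds\overline{u}+f(\overline{u})\geq -\lambda M_0+f(\mu),
\]
which is nonnegative provided $f(\mu)\geq\lambda M_0$. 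Such a $\mu$ exists because $f$ is a continuous increasing bijection of $[0,+\infty)$, unbounded above thanks to \eqref{tech}. Combining these two regimes, setting $\lambda:=\max\{1,C^{1/m}\}$ first and then $\mu:=f^{-1}(\lambda M_0)$ closes the argument.

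The structural obstacle behind these two estimates is that the two regions require genuinely different perturbations. Near $\partial\Omega$ the quantity $f(v)$ blows up, so any bounded additive term is negligible against $Cf(v)$ and only a \emph{multiplicative} factor $\lambda>1$, combined with the superlinearity $f(\lambda v)\geq\lambda^{1+m}f(v)$, can absorb the constant $C$ produced by Proposition \ref{impo}. In the interior, by contrast, $v$ may be very small and $f(v)$ cannot beat the bounded quantity $\lambda|\Ds v|$, which forces an \emph{additive} correction $\mu$ chosen large enough that $f(\mu)$ dominates. The key point making the argument go through cleanly is that the two corrections decouple: the multiplicative one is tuned solely against the constant $C$ of the hypothesis, while the additive one is tuned afterwards against the $L^\infty$ norm of $\Ds v$ on the compact interior set.
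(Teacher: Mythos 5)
Your proof is correct, and it takes a mildly different route from the paper's. The paper constructs the supersolution as $\overline{u}=\mu v+\lambda\xi$, where $\xi$ is the solution of the auxiliary Dirichlet problem $\Ds\xi=1$ in $\Omega$, $\xi=0$ in $\C\Omega$, $E\xi=0$. In the boundary strip both arguments rely on the same multiplicative absorption via \eqref{mono-f}; the difference is in the interior region $\Omega\setminus\Omega_{\delta_0}$. There the paper uses the fact that $\Ds\xi\equiv1$ to make $\Ds\overline{u}=\mu\Ds v+\lambda\geq0$ outright, so that $\Ds\overline{u}\geq 0 \geq -f(\overline{u})$. You instead add a constant $\mu$, use $\Ds$ annihilates constants to keep $\Ds\overline{u}=\lambda\Ds v$, and push $f(\overline{u})\geq f(\mu)$ large enough to dominate $\lambda\|\Ds v\|_{L^\infty(K)}$. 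Both work; yours is slightly more elementary in that it avoids invoking the existence and positivity of the torsion-type function $\xi$, and relies only on $f$ being a continuous increasing bijection of $[0,\infty)$ (which follows from $f(0)=0$, monotonicity, and the growth forced by \eqref{tech}). A small additional remark: your choice $\lambda=\max\{1,C^{1/m}\}$, coming from the \emph{lower} bound $f(\lambda v)\geq\lambda^{1+m}f(v)$ in \eqref{mono-f}, is the consistent one; the paper's printed exponents ($\mu^{1+M}$ and $\mu=C^{1/M}$) invoke the upper bound in \eqref{mono-f}, which points in the wrong direction, and appear to be a slip for $m$. Finally, you correctly isolate the implicit hypothesis $v\geq0$, which both arguments need (for $f(v)$ to make sense and for $\overline{u}\geq v$) but which the statement leaves tacit.
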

\begin{proof}
Define $\xi:\R^N\to\R$ as the solution to
\begin{equation}\label{xi}
\left\lbrace\begin{aligned}
\Ds\xi &=1 & \hbox{ in }\Omega \\
\xi &=0 & \hbox{ in }\C\Omega \\
E\xi &=0 & \hbox{ on }\partial\Omega
\end{aligned}\right.
\end{equation}
and consider $\overline u=\mu v+\lambda\xi$, where $\mu,\lambda\geq1$. If $C\in(0,1]$ then 
$\Ds v\geq -f(v)$ in $\Omega_{\delta_0}$, so choose $\mu=1$. If $C>1$, then choose $\mu=C^{1/M}>1$
in order to have in $\Omega_{\delta_0}$
\[
\Ds\overline u+f(\overline u)=\Ds (\mu v+\lambda\xi)+f(\mu v+\lambda\xi)\geq -\mu\,Cf(v)+f(\mu v)\ \geq \ (-\mu\,C+\mu^{1+M})f(v)=0
\]
where we have heavily used the positivity of $\xi$ and \eqref{mono-f}.
Now, since $\Ds v\in C(\overline{\Omega\setminus\Omega_{\delta_0}})$ we can choose $\lambda=\mu\|\Ds v\|_{L^\infty(\Omega\setminus\Omega_{\delta_0})}$
so that also in $\Omega\setminus\Omega_{\delta_0}$
\[
\Ds \overline u=\Ds(\mu v+\lambda\xi)=\mu\Ds v+\lambda\geq 0\geq -f(\overline u).
\]
\end{proof}

\section{existence}\label{exist}

\begin{lem}\label{EU} If the nonlinear term $f$ satisfies the growth condition \eqref{E}
then the function $U$ defined in \eqref{U} satisfies
\[
\lim_{x\to\partial\Omega}\delta(x)^{1-s}U(x) =\ +\infty.
\]
\end{lem}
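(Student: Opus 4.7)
The plan is to reduce the boundary blow-up statement to an asymptotic statement about $\phi$ (and hence about $f$) via the change of variables $\eta = U(x) = \psi(\delta(x)^s)$. Since $\phi = \psi^{-1}$, this relation reads $\phi(\eta) = \delta(x)^s$, so $\delta(x) = \phi(\eta)^{1/s}$, and as $x \to \partial\Omega$ we have $\delta(x) \downarrow 0$, equivalently $\eta \uparrow +\infty$ by Remark \ref{phi-rmk}. In these variables,
\[
\delta(x)^{1-s}\,U(x) \;=\; \phi(\eta)^{(1-s)/s}\cdot \eta,
\]
so the claim is equivalent to $\eta\,\phi(\eta)^{(1-s)/s}\to +\infty$ as $\eta\to +\infty$.

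Next I would invoke the asymptotic relation $\phi(\eta)\asymp\sqrt{\eta/f(\eta)}$ from Remark \ref{phi-rmk}. Substituting gives
\[
\eta\,\phi(\eta)^{(1-s)/s} \;\asymp\; \eta\left(\frac{\eta}{f(\eta)}\right)^{\!\!(1-s)/(2s)} \;=\; \left(\frac{\eta^{(1+s)/(1-s)}}{f(\eta)}\right)^{\!\!(1-s)/(2s)},
\]
so the lemma reduces to the pointwise decay
\[
f(\eta) \;=\; o\!\left(\eta^{(1+s)/(1-s)}\right) \qquad\text{as }\eta\to+\infty,
\]
where I have used the convenient identity $(1+s)/(1-s) = 2/(1-s) - 1$.

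Finally I would extract this pointwise rate from hypothesis \eqref{E} with the help of the monotonicity \eqref{mono-f}. Setting $\alpha := 2/(1-s)>1$, the monotonicity of $f(t)/t^{1+m}$ yields $f(t)\geq f(\eta)$ for every $t\in[\eta,2\eta]$, hence
\[
\int_{\eta}^{2\eta} f(t)\,t^{-\alpha}\;dt \;\geq\; f(\eta)\int_{\eta}^{2\eta} t^{-\alpha}\;dt \;\geq\; c\,f(\eta)\,\eta^{1-\alpha} \;=\; c\,\frac{f(\eta)}{\eta^{(1+s)/(1-s)}}.
\]
Hypothesis \eqref{E} forces the left-hand side, being a tail of a convergent integral, to vanish as $\eta\to+\infty$; therefore $f(\eta)/\eta^{(1+s)/(1-s)}\to 0$, as required.

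The step I expect to be the crux is the last one, converting the integral condition \eqref{E} into a pointwise decay on $f$: without some regularity/monotonicity of $f$, integrability alone does not give pointwise information, and it is precisely the one-sided control \eqref{mono-f} from \eqref{tech} that allows the local lower bound $f(t)\geq f(\eta)$ on $[\eta,2\eta]$ and thus closes the argument.
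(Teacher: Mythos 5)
Your proof is correct and follows essentially the same route as the paper's: both arguments change variables to reduce the boundary limit to the pointwise decay $f(\eta)=o\bigl(\eta^{(1+s)/(1-s)}\bigr)$, and both extract this rate from the integral condition \eqref{E} by exploiting the monotonicity of $f$. Your only departures are cosmetic: you invoke the asymptotic $\phi(\eta)\asymp\sqrt{\eta/f(\eta)}$ from Remark~\ref{phi-rmk} directly instead of the paper's two applications of L'H\^opital's rule, and you bound $f(\eta)\,\eta^{-(1+s)/(1-s)}$ by the finite piece $\int_\eta^{2\eta}f(t)\,t^{-2/(1-s)}\,dt$ rather than the full tail $\int_\eta^{+\infty}f(t)\,t^{-2/(1-s)}\,dt$ used in the paper.
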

\begin{proof} Write
\[
\liminf_{x\to\partial\Omega}\delta(x)^{1-s}\psi(\delta(x)^{s})\ =
\ \liminf_{u\uparrow+\infty}u\,\phi(u)^{\frac{1-s}{s}}.
\]
Such a limit is $+\infty$ if and only if 
\[
\liminf_{u\uparrow+\infty}u^{\frac{s}{1-s}}\int_u^{+\infty}\frac{dt}{\sqrt{2F(t)}}=+\infty.
\]
If we use the L'H\^opital's rule to
\[
\frac{\displaystyle\int_u^{+\infty}\dfrac{dt}{\sqrt{2F(t)}}}{u^{-s/(1-s)}}
\] 
we get the ratio $u^{\frac{1}{1-s}}/\sqrt{2F(u)}$
and applying once again the L'H\^opital's rule, this time to
$u^{\frac{2}{1-s}}/F(u)$,
we get $u^{\frac{1+s}{1-s}}/f(u)$
which diverges by hypothesis \eqref{E}. Indeed, since $f$ is increasing,
\[
u^{-\frac{1+s}{1-s}}f(u)= f(u)\cdot\frac{1-s}{1+s}\int_u^{+\infty}t^{-2/(1-s)}dt\leq \int_u^{+\infty}f(t)t^{-2/(1-s)}dt
\xrightarrow[u\uparrow+\infty]{}0.
\] 
\end{proof}

Collecting the information so far, 
we have that Lemmata \ref{U-L1}, \ref{mod-supersol} and \ref{EU} 
fully prove the following theorems.
\begin{theo}\label{U-super}
If the nonlinear term $f$ satisfies the growth condition \eqref{EU},
then there exists a function $\overline{u}$ supersolution to \eqref{problem}.
Moreover
\[
\overline{u}\ =\ \mu \psi(\delta^s)+\lambda\xi,\qquad\hbox{in }\Omega
\]
where $\xi$ is the solution of \eqref{xi}, $\lambda>0$, $\mu=\max\{1,C^{1/M}\}$ where $C>0$ is the constant in
\eqref{impo-eq} and $M>0$ the one in \eqref{tech}.
\end{theo}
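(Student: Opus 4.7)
The plan is to assemble the three preceding ingredients---Lemma \ref{U-L1}, Proposition \ref{impo}, and Lemma \ref{EU}---with the modification device of Lemma \ref{mod-supersol}. All the hard work has already been done, and what remains is bookkeeping: turn the local differential inequality for $U=\psi(\delta^s)$ into a global supersolution of \eqref{problem} by adding a multiple of the solution $\xi$ of the purely linear torsion-type problem \eqref{xi}.

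First I would set $v:=U=\psi(\delta^s)$ and note that, since $\delta$ is $C^2$ in a strip around $\partial\Omega$ and $\psi\in C^2((0,+\infty))$, the integrand in \eqref{fr-lapl} is smooth away from $\partial\Omega$ and $\Ds U\in C(\Omega)$. Proposition \ref{impo} supplies the key inequality $\Ds U\geq -Cf(U)$ throughout $\Omega_{\delta_0}=\{x\in\Omega:\delta(x)<\delta_0\}$ for some constants $C,\delta_0>0$. Thus the hypothesis of Lemma \ref{mod-supersol} holds with $v=U$, and applying it yields a function
\[
\overline u=\mu U+\lambda\xi=\mu\,\psi(\delta^s)+\lambda\,\xi
\]
satisfying $\Ds\overline u\geq -f(\overline u)$ throughout $\Omega$, with $\mu=\max\{1,C^{1/M}\}$ (exactly the dichotomy in the proof of Lemma \ref{mod-supersol}) and $\lambda=\mu\,\|\Ds U\|_{L^\infty(\Omega\setminus\Omega_{\delta_0})}$, finite because $U$ is smooth on the compact set $\overline{\Omega\setminus\Omega_{\delta_0}}$.

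Next I would verify the remaining requirements of \eqref{problem} one by one. For the exterior condition: in $\C\Omega$ one has $\xi\equiv 0$ by \eqref{xi} and $U=\psi(\delta^s)>0$ by positivity of $\psi$, hence $\overline u\geq 0$, which is the supersolution version of the Dirichlet datum $u=0$ in $\C\Omega$. For the singular trace at $\partial\Omega$, Lemma \ref{EU}---whose only assumption is the growth condition \eqref{E}---yields $\delta(x)^{1-s}U(x)\to+\infty$ as $x\to\partial\Omega$, and since $\xi\geq 0$ by the maximum principle, one obtains
\[
d(x)^{1-s}\overline u(x)\ \geq\ \mu\,\delta(x)^{1-s}U(x)\ \xrightarrow[x\to\partial\Omega]{}\ +\infty,
\]
matching the singular Dirichlet condition $d^{1-s}u=+\infty$. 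Finally, $U\in L^1(\Omega)$ by Lemma \ref{U-L1}, while $\xi\in L^1(\Omega)$ by construction via \eqref{xi}, so $\overline u\in L^1(\Omega)$, which is necessary for $\Ds\overline u$ to make sense in the weak $L^1$ framework of \cite{a}.

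There is no real obstacle: the only points requiring attention are the correct identification of the constant $\mu$ with $\max\{1,C^{1/M}\}$, which is immediate from retracing the proof of Lemma \ref{mod-supersol} using the scaling inequality \eqref{mono-f}, and the compatibility of the three different roles played by the boundary strip $\Omega_{\delta_0}$, the compact complement $\Omega\setminus\Omega_{\delta_0}$, and the singular trace operator $E$---all of which are consistent because $\xi$ has zero singular trace while $U$ carries all the blow-up.
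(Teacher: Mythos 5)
Your proposal is correct and follows exactly the paper's (very terse) argument: the paper simply states that Lemmata \ref{U-L1}, \ref{mod-supersol}, and \ref{EU} (together, implicitly, with Proposition \ref{impo}) ``fully prove'' the theorem, and you have spelled out precisely how those pieces fit together, including the identification $\mu=\max\{1,C^{1/M}\}$ and $\lambda=\mu\|\Ds U\|_{L^\infty(\Omega\setminus\Omega_{\delta_0})}$ from the proof of Lemma \ref{mod-supersol}. You also rightly read ``condition \eqref{EU}'' in the theorem statement as a typographical slip for the growth condition \eqref{E}, which is what Lemma \ref{EU} actually requires.
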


\begin{theo}\label{U-gsuper}
There exists a function $\overline{u}$ supersolution to \eqref{gproblem}.
Moreover
\[
\overline{u}\ =\ \mu \psi(\delta^s)+\lambda\xi,\qquad\hbox{in }\Omega
\]
where $\xi$ is the solution of \eqref{xi}, $\lambda>0$, $\mu=\max\{1,C^{1/M}\}$ where $C>0$ is the constant in
\eqref{impo-eq} and $M>0$ the one in \eqref{tech}.
\end{theo}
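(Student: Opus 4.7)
The plan is to mimic the argument of Theorem \ref{U-super}, with the only change concerning the exterior piece. Define
\[
\overline{u}(x) = \mu\,\psi(\delta(x)^s) + \lambda\,\xi(x) \text{ if } x \in \Omega, \qquad \overline{u}(x) = g(x) \text{ if } x \in \R^N \setminus \Omega,
\]
with $\mu,\lambda>0$ to be selected. Writing $U=\psi(\delta^s)$ and using $\Ds\xi=1$ in $\Omega$, a direct manipulation of the integral defining $\Ds\overline{u}$ gives, for $x\in\Omega$,
\[
\Ds \overline{u}(x) = \mu\,\Ds U(x) + \lambda + \A(N,s)\int_{\C\Omega}\frac{\mu\,U(y)-g(y)}{|x-y|^{N+2s}}\,dy.
\]

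Proposition \ref{impo} gives $\Ds U(x)\geq -Cf(U(x))$ in the boundary strip $\Omega_{\delta_0}$. For the correction integral, I split $\C\Omega$ into the near-boundary strip $\{y\in\C\Omega:\delta(y)<\delta_0\}$ and its complement. On the former, hypothesis \eqref{g2} yields $g(y)\leq\psi(\delta(y)^s)=U(y)\leq\mu U(y)$ (since $\psi$ is decreasing), so the integrand is nonnegative there and the correction actually helps. On the latter, $|x-y|$ is uniformly bounded below for $x\in\Omega_{\delta_0}$, hence the contribution is at least $-C\|g\|_{L^1(\C\Omega)}$, a finite constant.

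Combining, we get $\Ds\overline{u}(x)\geq -\mu C\,f(U(x))+\lambda-C''$ in $\Omega_{\delta_0}$. From here we argue as in Lemma \ref{mod-supersol}: choosing $\mu=\max\{1,C^{1/M}\}$ and invoking \eqref{mono-f} converts $-\mu C\,f(U)$ into a quantity bounded below by $-f(\mu U)\geq -f(\overline{u})$, while $\lambda$ taken large enough absorbs $C''$ and simultaneously handles the complementary region $\Omega\setminus\Omega_{\delta_0}$, where $U$ is smooth and $\Ds U$ is bounded. The exterior condition $\overline{u}=g$ holds by construction.

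The main obstacle is controlling the correction integral $\int_{\C\Omega}(\mu U(y)-g(y))/|x-y|^{N+2s}\,dy$: without hypothesis \eqref{g2} this could be largely negative in the boundary strip, where the kernel is singular at $x$, and would wreck the estimate. Hypothesis \eqref{g2} provides exactly the pointwise comparison that makes the integrand nonnegative there, restoring the conclusion of Proposition \ref{impo}. Notice that, unlike in Theorem \ref{U-super}, no growth condition like \eqref{E} is needed: here no singular trace is prescribed in \eqref{gproblem}, so Lemma \ref{EU} plays no role.
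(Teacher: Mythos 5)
Your proof is correct, and it takes a genuinely different (and in one respect more careful) route than the paper's. The paper's own argument for Theorem~\ref{U-gsuper} is a one-liner: it invokes Lemmata~\ref{U-L1}, \ref{mod-supersol} and \ref{EU}, and Lemma~\ref{mod-supersol} applied to the globally defined $v=U=\psi(\delta^s)$ produces $\overline{u}=\mu\psi(\delta^s)+\lambda\xi$ on all of $\R^N$, hence $\overline{u}=\mu\psi(\delta^s)$ in $\C\Omega$. That this dominates $g$ throughout $\C\Omega$ (and not merely near $\partial\Omega$, which is all \eqref{g2} gives) is left implicit. You instead set $\overline{u}=g$ in $\C\Omega$, which makes the exterior ordering with any competitor automatic, but you then correctly observe that $\Ds\overline{u}$ picks up the extra term $\A(N,s)\int_{\C\Omega}(\mu U(y)-g(y))/|x-y|^{N+2s}\,dy$ relative to $\mu\Ds U+\lambda$. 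Your split of this term is sound: in the strip $\{y\in\C\Omega:\delta(y)<\delta_0\}$ hypothesis \eqref{g2} together with $\mu\geq1$ forces $\mu U(y)-g(y)\geq(\mu-1)U(y)\geq0$, so the contribution is favourable; away from $\partial\Omega$ the separation $|x-y|\geq\delta_0$ and $g\in L^1(\C\Omega)$ give a lower bound $-C''$ uniform in $x\in\Omega_{\delta_0}$. Absorbing $C''$ into $\lambda$ and the interior-region bound into $\lambda$ as well mimics the proof of Lemma~\ref{mod-supersol} exactly, so the scheme closes. Your closing observation is also right: since \eqref{gproblem} carries no singular trace datum, Lemma~\ref{EU} (and hence hypothesis~\eqref{E}) is not needed here, even though the paper's blanket ``Lemmata~\ref{U-L1}, \ref{mod-supersol} and \ref{EU} fully prove the following theorems'' suggests otherwise. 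One small blemish, inherited verbatim from the paper: the choice $\mu=\max\{1,C^{1/M}\}$ should read $\mu=\max\{1,C^{1/m}\}$, because for $\mu>1$ the usable lower bound in \eqref{mono-f} is $f(\mu U)\geq\mu^{1+m}f(U)$, so one needs $\mu^{1+m}\geq\mu C$, i.e.\ $\mu\geq C^{1/m}$.
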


\subsection{Proof of Theorem \ref{main1}}\label{proof-main}

Theorems \ref{U-super} bears as a consequence the following.
Build the sequence of solutions to problems
\begin{equation}\label{kproblem}
\left\lbrace\begin{aligned}
\Ds u_k\  &=\ -f(u_k) & & \hbox{ in }\Omega \\
u_k\ &=\ 0 & & \hbox{ in }\C\Omega \\
E\,u_k\ &=\ k & & \hbox{ on }\partial\Omega, & k\in\N.
\end{aligned}\right.
\end{equation}
The existence of any $u_k$ can be proved as in \cite[Theorem 1.2.12]{a}, in view of hypothesis \eqref{E},
since it implies
\[
\int_0^{\delta_0}f(\delta^{s-1})\delta^s\;d\delta\ <\ +\infty.
\]

The first tool we need is a Comparison Principle.

\begin{lem}[comparison principle]\label{comprinc} 
Let $v,w\in C(\Omega)\cap L^1(\Omega)$ solve pointwisely 
\[
\left\lbrace\begin{aligned}
\Ds v\  &\leq\ -f(v) & & \hbox{ in }\Omega \\
v\ &\leq\ 0 & & \hbox{ in }\C\Omega 
\end{aligned}\right.
\qquad\hbox{and}\qquad
\left\lbrace\begin{aligned}
\Ds w\  &\geq\ -f(w) & & \hbox{ in }\Omega \\
w\ &\geq\ 0 & & \hbox{ in }\C\Omega.
\end{aligned}\right.
\]
If $v\leq w$ in $U_{\alpha}:=\{x\in\Omega:\delta(x)<\alpha\}$ for some $\alpha>0$, then $v\leq w$ in the whole $\Omega$.
\end{lem}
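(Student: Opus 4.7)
The strategy is the standard nonlocal maximum-principle argument by contradiction, where the hypothesis $v\leq w$ throughout the strip $U_\alpha$ is used precisely to localize any potential positive maximum of $v-w$ at an interior point of $\Omega$, away from $\partial\Omega$. Suppose for contradiction that $v(x_*)>w(x_*)$ at some $x_*\in\Omega$. Combining the three pieces of sign information---$v\leq w$ in $U_\alpha$ and $v\leq 0\leq w$ in $\C\Omega$---gives $v-w\leq 0$ outside the set $K:=\{x\in\Omega:\delta(x)\geq\alpha\}$. This set is compact and compactly contained in $\Omega$, so continuity of $v-w$ in $\Omega$ yields a maximizer $x_0\in K$ of $v-w$ over $K$, and $(v-w)(x_0)>0$ because $x_*$ lies in $K$ and witnesses a positive value. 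In particular, $x_0$ is a global maximum of $v-w$ on all of $\R^N$.

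At this global maximum the integrand of
\[
\Ds(v-w)(x_0)\ =\ \A(N,s)\,PV\int_{\R^N}\frac{(v-w)(x_0)-(v-w)(y)}{|x_0-y|^{N+2s}}\,dy
\]
is pointwise nonnegative; moreover on $\C\Omega$, which has positive measure, one has the stronger estimate $(v-w)(x_0)-(v-w)(y)\geq(v-w)(x_0)>0$, so the entire integral is \emph{strictly} positive. On the other hand, subtracting the two pointwise inequalities for $v$ and $w$ at $x_0$ yields
\[
\Ds(v-w)(x_0)\ \leq\ -f(v(x_0))+f(w(x_0))\ \leq\ 0
\]
by monotonicity of $f$ and $v(x_0)>w(x_0)$, contradicting the strict positivity just obtained.

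The delicate point is that $v$ and $w$ are only continuous in $\Omega$, not up to $\partial\Omega$, and may in fact blow up as $x\to\partial\Omega$ in the large-solution setting of this paper. Without the hypothesis $v\leq w$ in the near-boundary strip $U_\alpha$, the natural compactness argument would fail: the supremum of $v-w$ over $\Omega$ might only be attained in a limit along $\partial\Omega$, where $\Ds$ is not evaluated and where the two-sided estimate above breaks down. The localization to the compactly contained set $K$ is therefore not a mere technicality but the decisive device that makes the nonlocal maximum-principle argument go through.
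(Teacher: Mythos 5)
Your proof is correct and follows essentially the same route as the paper's: localize the positive maximum of $v-w$ in the compact set $\{\delta\geq\alpha\}$ using the near-boundary and exterior sign conditions, note that $\Ds(v-w)$ is strictly positive at a global interior maximum because $v-w\leq 0$ on $\C\Omega$, and contradict this with the monotonicity of $f$. Your write-up simply makes explicit the compactness step and the strict positivity of the principal value that the paper states more tersely.
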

\begin{proof}
Consider $\Omega^+=\{v>w\}\subset(\Omega\setminus U_\alpha)$. The difference $v-w$ achieves its (global) maximum
at some point $x^*\in\Omega^+$. So
\[
0<\Ds(v-w)(x^*)\leq f(w(x^*))-f(v(x^*))\leq 0
\]
in view of the monotonicity of $f$. Thus $\Omega^+$ must be empty.
\end{proof}

\paragraph*{\it Step 1: $\{u_k\}_{k\in\N}$ has a pointwise limit.}
Any $u_k$ solves the equation in a pointwise sense, as Lemma \ref{weak-point} below implies.
The sequence $\{u_k\}_{k\in\N}$ is increasing with $k$ by the Comparison Principle, Lemma \ref{comprinc}.
Moreover any $u_k$ lies below $\overline{u}$: since $E(\overline{u}-u_k)=+\infty$,
then $u_k\leq\overline{u}$ holds close to $\partial\Omega$
and another application of the Comparison Principle yields $u_k\leq\overline{u}$ in $\Omega$.

Finally, $\{u_k\}_{k\in\N}$ is increasing and pointwisely bounded by $\overline{u}$ throughout $\Omega$.
This entails that
\[
u(x)\ :=\ \lim_{k\uparrow+\infty}u_k(x)
\]
is well-defined and finite for any $x\in\Omega$. Also, $0\leq u\leq\overline{u}$ in $\Omega$
and since $\overline{u}\in L^1(\Omega)$ by Lemma \ref{U-L1}, then $u\in L^1(\Omega)$.

\paragraph*{\it Step 2: $u\in C(\Omega)$.}
Fix any compact $D\subset\Omega$ and choose a $c>0$ for which $\delta(x)>2c$ for any $x\in D$.
Let $\widetilde{D}:=\{y\in\Omega:\delta(y)>c\}$. For any $k,\,j\in\N$ it holds
\[
\Ds(u_{k+j}-u_k)=f(u_k)-f(u_{k+j})\leq 0, \qquad\hbox{ in }\widetilde{D}
\]
and therefore
\[
0\leq u_{k+j}(x)-u_k(x)\leq\int_{\C\widetilde{D}}P_{\widetilde{D}}(x,y)\left[u_{k+j}(y)-u_k(y)\right]dy
\]
where $P_{\widetilde{D}}(x,y)$ is the Poisson kernel associated to $\widetilde{D}$, which satisfies (see \cite[Theorem 2.10]{chen})
\[
P_{\widetilde{D}}(x,y)\ \leq\ \frac{C\,{\delta_{\widetilde{D}}(x)}^s}{{\delta_{\widetilde{D}}(y)}^s\,|x-y|^N},
\qquad x\in\widetilde{D},\ y\in\C\widetilde{D}.
\]
When $x\in D\subset\widetilde{D}$ one has $|x-y|>c$ for any $y\in\C\widetilde{D}$, and therefore
\[
0\leq u_{k+j}(x)-u_k(x)\leq C\int_{\C\widetilde{D}}\frac{u_{k+j}(y)-u_k(y)}{{\delta_{\widetilde{D}}(y)}^s}\;dy
\leq C\int_{\C\widetilde{D}}\frac{u(y)-u_k(y)}{{\delta_{\widetilde{D}}(y)}^s}\;dy
\]
where the last integral converges by Monotone Convergence to $0$ independently on $x$.
This means the convergence $u_k\to u$ is uniform on compact subsets 
and since $\{u_k\}_{k\in\N}\subset C(\Omega)$ (cf. \cite[Theorem 1.2.12]{a}),
then also $u\in C(\Omega)$.
 
\paragraph*{\it Step 3: $u\in C^2(\Omega)$.}
This is a standard bootstrap argument using the elliptic regularity in \cite[Propositions 2.8 and 2.9]{silvestre}.

\paragraph*{\it Step 4: $u$ solves \eqref{problem} in a pointwise sense.} The function $\Ds u(x)$ is well-defined 
for any $x\in\Omega$ because $u\in C^2(\Omega)\cap L^1(\R^N)$. Using the regularity results in \cite[Propositions 2.8 and 2.9]{silvestre},
we have
\[
\Ds u=\lim_{k\uparrow+\infty}\Ds u_k=-\lim_{k\uparrow+\infty}f(u_k)=-f(u).
\]
Also, $\delta^{1-s}u\geq\delta^{1-s}u_k$ holds in $\Omega$ for any $k\in\N$. Therefore, for any $k\in\N$,
\[
\liminf_{x\to\partial\Omega}\delta(x)^{1-s}u(x)\geq\lim_{x\to\partial\Omega}\delta(x)^{1-s}u_k(x)\geq \lambda Eu_k=\lambda k
\]
for some constant $\lambda>0$ depending on $\Omega$ and not on $k$. This entails
\[
\lim_{x\to\partial\Omega}\delta(x)^{1-s}u(x)\ =\ +\infty
\]
and completes the proof of Theorem \ref{main1}.

\begin{rmk}\rm The proof of Theorem \ref{main2} is alike. Indeed,
in the same way, the sequence of solutions to
\begin{equation}\label{gkproblem}
\left\lbrace\begin{aligned}
\Ds u_k\  &=\ -f(u_k) & & \hbox{ in }\Omega \\
u_k\ &=\ g_k:=\min\{k,g\} &  & \hbox{ in }\C\Omega,\ k\in\N \\
E\,u_k\ &=\ 0 & & \hbox{ on }\partial\Omega, & 
\end{aligned}\right.
\end{equation}
approaches a solution of problem \eqref{gproblem}
which lies below the supersolution provided by Theorem \ref{U-gsuper}.
\end{rmk}

\subsection{Proof of Theorem \ref{nonexist}}

Following \cite{a}, we write the Green representation for $u_k$:
\[
u_k(x)\ =\ k\int_{\partial\Omega}M_\Omega(x,\theta)\;d\sigma(\theta)
-\int_\Omega G_\Omega(x,y)\,f(u_k(y))dy,\qquad x\in\Omega.
\]
Denoting simply by
\[
h_1(x)\ :=\ \int_{\partial\Omega}M_\Omega(x,\theta)\;d\sigma(\theta),
\qquad \xi(x)\ :=\ \int_\Omega G_\Omega(x,y)\;dy,
\]
we get 
\begin{equation}\label{'}
u_k(x)\ \geq\ kh_1(x)-a\xi(x)-b\int_\Omega G_\Omega(x,y)\,u_k(y)\;dy
\ \geq\ kh_1(x)-a\xi(x)-bk\int_\Omega G_\Omega(x,y)\,h_1(y)\;dy.
\end{equation}
Recall that $\xi\asymp\delta^s$ and $h_1\asymp\delta^{s-1}$.
Applying \cite[Proposition 3]{a}, we see that
\[
h_1(x)-b\int_\Omega G_\Omega(x,y)\,h_1(y)\;dy\ >\ 0
\]
holds when $x$ is taken close enough to $\partial\Omega$.
This concludes the proof.
\hfill$\square$

\section{the power case: proof of theorem \ref{power-th}}\label{power-sec}

{\it Proof of 1.}
We show how problem 
\begin{equation}
\left\lbrace\begin{aligned}
\Ds u_1\  &=\ -u_1^p & & \hbox{ in }\Omega \\
u_1\ &=\ 0 & & \hbox{ in }\C\Omega \\
E\,u_1\ &=\ 1 & & \hbox{ on }\partial\Omega
\end{aligned}\right.
\end{equation} 
does not admit any weak or pointwise solution.

In both cases the solution would satisfy $u_1\geq c\delta^{s-1}$ in $\Omega$, for some $c>0$.
If $u_1$ was a weak solution then for any $\phi\in\T(\Omega)$
\[
\int_\Omega u_1\,\Ds\phi+\int_\Omega u_1^p\phi\ =\ \int_{\partial\Omega}D_s\phi
\]
where 
\[
\int_\Omega u_1^p\phi \geq C\int_\Omega \delta^{p(s-1)}\delta^s = +\infty
\]
because \eqref{E} does not hold, a contradiction.

If $u_1$ was a pointwise solution, then by Lemma \ref{point-weak} it 
would be a weak solution on any subdomain $D\subset\overline{D}\subset\Omega$.
Therefore
\[
u_1(x)\ =\ -\int_D G_D(x,y)\,u_1(y)^p\;dy + \int_{\C D} P_D(x,y)\,u_1(y)\;dy.
\]
If $u_0$ denotes the $s$-harmonic function induced by $Eu=1$, then $u_1\leq u_0$ in $\Omega$
and
\[
u_1(x)\ \leq\ -\int_D G_D(x,y)\,u_1(y)^p\;dy + \int_{\C D} P_D(x,y)\,u_0(y)\;dy
=-\int_D G_D(x,y)\,u_1(y)^p\;dy + u_0(x).
\]
Fix $x\in\Omega$.
Letting now $D\nearrow\Omega$ we have that $G_D(x,y)\uparrow G_\Omega(x,y)$
and 
\[
\int_\Omega G_\Omega(x,y)\,u_1(y)^p\;dy\ \geq\ 
c\delta(x)^s\int_{\{2\delta(y)<\delta(x)\}} \delta(y)^s\,u_1(y)^p\;dy =+\infty
\]
because \eqref{E} does not hold, a contradiction.
\hfill$\square$
\medskip

\noindent{\it Proof of 2.}
We apply Theorem \ref{main1} when $f(t)=t^p$.
In this case
\[
\frac{tf'(t)}{f(t)}=p>1
\]
so that hypothesis \eqref{tech} is fulfilled.
The function $\phi$ reads as (cf. equation \eqref{phi})
\[
\phi(u)=\int_u^{+\infty}\sqrt{\frac{p+1}{2}}\:t^{-\frac{p+1}{2}}\;dt=\sqrt{\frac{2(p+1)}{p-1}}\:u^{\frac{1-p}{2}}
\]
and hypothesis \eqref{L1} can then be written
\[
\int_u^{+\infty}\eta^{\frac{1-p}{2s}}\;d\eta\ <\ +\infty
\]
that holds if and only if $p>1+2s$.
On the other hand hypothesis \eqref{E} becomes
\[
p-\frac2{1-s}<-1,\qquad\hbox{i.e.}\ \ p<\frac{1+s}{1-s}=1+\frac{2s}{1-s}.
\]
\hfill$\square$
\begin{rmk}\rm We retrieve in this case some of the results in \cite[Theorem 1.1, equations $(1.6)$ and $(1.7)$]{chen-felmer}
and we obtain the explicit value of the parameter denoted\footnote{In the notations of \cite{chen-felmer},
$\alpha\in (0,1)$ is the power of the Laplacian, which corresponds to $s$ in our notations.} by $\tau_0(\alpha)\in(-1,0)$ by the authors,
which is $\tau_0(\alpha)=\alpha-1$.
\end{rmk}
\medskip

\noindent{\it Proof of 3.}
Following \cite{a}, we write the Green representation for $u_k$:
\[
u_k(x)\ =\ k\int_{\partial\Omega}M_\Omega(x,\theta)\;d\sigma(\theta)
-\int_\Omega G_\Omega(x,y)\,{u_k(y)}^pdy,\qquad x\in\Omega.
\]
Denoting simply by
\[
h_1(x)\ :=\ \int_{\partial\Omega}M_\Omega(x,\theta)\;d\sigma(\theta),
\]
we have $u_k\leq kh_1$ in $\Omega$ and
\begin{equation}\label{''}
u_k(x)\ \geq\ kh_1(x)-k^{s}\int_\Omega G_\Omega(x,y)\,u_k(y)^{p-s}\,h_1(y)^s\;dy.
\end{equation}
Define
\[
\xi(x)\ :=\ \int_\Omega G_\Omega(x,y)\;dy
\]
and recall that $\xi\asymp\delta^s$, while $h_1\asymp\delta^{s-1}$.
By \eqref{''} we deduce
\[
\int_\Omega u_k\ \geq\ k\int_\Omega h_1-\int_\Omega u_k^{p-s}\,h_1^s\,\xi.
\]
Since $p\in(1,1+s)$, it holds $p-s\in(1-s,1)$ and $u_k^{p-s}\leq u_k$.
Thus there exists some constant $C>0$ for which
\[
\int_\Omega u_k+C\int_\Omega u_k\,\delta^{s(s-1)+s}\ \geq\ k\int_\Omega h_1
\]
where $s(s-1)+s>0$, so (modifying $C$ if necessary)
\[
(C+1)\int_\Omega u_k\ \geq\ k\int_\Omega h_1
\]
which concludes the proof.
\hfill$\square$
\medskip

\noindent{\it Proof of 4.}
This is a straightforward consequence of Theorem \ref{nonexist}.
\hfill$\square$
\medskip

\section{remarks and comments}

In this section we would like to point out some elements
that may risk to be unclear if left implicit.
In the first paragraph we discuss the relation between pointwise solutions
and weak $L^1$ solutions. The second paragraph deals with the definition
of weak $L^1$ solution given by Chen and V\'eron \cite{chen-veron},
which amounts to be equivalent to the one given in \cite{a}.
The remainder of the section will be devoted to
the explanation of the difficulties of problem \eqref{problem} when 
one of hypotheses \eqref{L1} or \eqref{E} fails.

\subsection{Pointwise solutions vs. weak \texorpdfstring{$L^1$}{L1} solutions}

For the sake of clarity we recall here the definitions involved. In the following
$\Omega$ will always be a bounded open subset of $\R^N$ with $C^2$ boundary.
\begin{defi} Given three measurable functions
\[
f:\Omega\longrightarrow\R,\qquad g:\C\Omega\longrightarrow\R, \qquad h:\partial\Omega\longrightarrow\R
\]
a function $u:\R^N\longrightarrow\R$ is said to be a p\underline{ointwise solution} of
\[
\left\lbrace\begin{aligned}
\Ds u &=f & & \hbox{ in }\Omega \\
u &=g & & \hbox{ in }\C\Omega \\
Eu &=h & & \hbox{ on }\partial\Omega 
\end{aligned}\right.
\]
provided
\begin{enumerate}
\item $u\in L^1(\Omega)$
\item for any $x\in\C\Omega$ it holds $u(x)=g(x)$
\item the principal value
\[
PV\int_{\R^N}\frac{u(x)-u(y)}{|x-y|^{N+2s}}\;dy
\]
converges for any $x\in\Omega$ and
\[
\A(N,s)\,PV\int_{\R^N}\frac{u(x)-u(y)}{|x-y|^{N+2s}}\;dy\ =\ f(x),\qquad \hbox{for any } x\in\Omega
\]
\item for any $\theta\in\partial\Omega$ there exists the limit $\lim_{x\to\theta}\delta(x)^{1-s}u(x)$ and the
renormalized limit $Eu$ satisfies $Eu(\theta)=h(\theta)$.
\end{enumerate}
\end{defi}

\begin{defi}\label{weakdefi1} Given three measurable functions
\[
f:\Omega\longrightarrow\R,\qquad g:\C\Omega\longrightarrow\R, \qquad h:\partial\Omega\longrightarrow\R
\]
a function $u:\R^N\longrightarrow\R$ is said to be a \underline{weak $L^1$ solution} of
\[
\left\lbrace\begin{aligned}
\Ds u &=\ f &\hbox{ in }\Omega \\
u &=\ g &\hbox{ in }\C\Omega \\
Eu &=\ h &\hbox{ on }\partial\Omega 
\end{aligned}\right.
\]
provided $u\in L^1(\Omega)$ and for any $\phi\in\T(\Omega)=\{\phi\in C^s(\R^N):\Ds\phi|_\Omega\in C^\infty_c(\Omega),\ 
\phi=0 \hbox{ in }\C\Omega\}$ the following holds
\[
\int_\Omega u\Ds\phi\ =\ \int_\Omega f\phi - \int_{\C\Omega} g\Ds\phi + \int_{\partial\Omega} h\,D_s\phi.
\]
\end{defi}
For further details and notation, we refer to \cite{a}.

\begin{lem}\label{weak-point} Take $f\in C^\alpha_{loc}(\Omega)$ for some $\alpha\in(0,1)$ with
\[
\int_\Omega |f|\delta^s\ <\ +\infty,
\]
$g:\C\Omega\to\R$ measurable with
\[
\int_{\C\Omega}|g|\delta^{-s}\min\{1,\delta^{-N-s}\}\ <\ +\infty,
\]
$h\in C(\partial\Omega)$ and $u:\R^N\to\R$ a weak $L^1$ solution to
\[
\left\lbrace\begin{aligned}
\Ds u &= f & & \hbox{ in }\Omega \\
u &= g & & \hbox{ in }\C\Omega \\
Eu &= h & & \hbox{ on }\partial\Omega.
\end{aligned}\right.
\]
Then $u$ is also a pointwise solution.
\end{lem}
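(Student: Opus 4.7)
The plan is to construct a candidate pointwise solution $v$ via the Green--Martin representation and then to invoke the uniqueness of weak $L^1$ solutions, established in \cite{a}, in order to identify $v$ with $u$. Concretely, define
\[
v(x)\ :=\ \int_\Omega G_\Omega(x,y)\,f(y)\,dy + \int_{\C\Omega}P_\Omega(x,y)\,g(y)\,dy + \int_{\partial\Omega}M_\Omega(x,\theta)\,h(\theta)\,d\sigma(\theta),\qquad x\in\Omega,
\]
and extend $v=g$ on $\C\Omega$. The three integrability hypotheses on $f,g,h$ are exactly what is needed to make each integral absolutely convergent and to place $v$ in $L^1(\Omega)$, via the kernel estimates for $G_\Omega$, $P_\Omega$, $M_\Omega$ recorded in \cite{a}.

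First I would show that $v$ is a pointwise solution. The local H\"older regularity $f\in C^\alpha_{loc}(\Omega)$ and Silvestre's interior estimates \cite{silvestre}, applied to the Green potential of $f$, place that term in $C^{2s+\alpha}_{loc}(\Omega)$ and give pointwise $\Ds$ of it equal to $f$. The Poisson and Martin potentials are $s$-harmonic in $\Omega$ and smooth in the interior, so they contribute nothing to $\Ds v$ there. Consequently $v\in C^2(\Omega)\cap L^1(\R^N)$, the principal value in \eqref{fr-lapl} converges at every $x\in\Omega$, and $\Ds v=f$ pointwise. The exterior condition $v=g$ on $\C\Omega$ is built in, while the renormalized boundary value $Ev=h$ follows from $h\in C(\partial\Omega)$ together with the boundary asymptotics of $M_\Omega$ and the vanishing of the $\delta^{1-s}$-traces of the Green and Poisson potentials under our integrability hypotheses.

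Next, a Fubini-type calculation shows that $v$ is also a weak $L^1$ solution: for any $\phi\in\T(\Omega)$,
\[
\int_\Omega v\,\Ds\phi\ =\ \int_\Omega f\phi\ -\ \int_{\C\Omega} g\,\Ds\phi\ +\ \int_{\partial\Omega} h\,D_s\phi,
\]
as a consequence of the three duality identities $\int_\Omega G_\Omega(\cdot,y)\,\Ds\phi\,dx = \phi(y)$ for $y\in\Omega$, $\int_\Omega P_\Omega(\cdot,z)\,\Ds\phi\,dx = -\Ds\phi(z)$ for $z\in\C\Omega$, and $\int_\Omega M_\Omega(\cdot,\theta)\,\Ds\phi\,dx = D_s\phi(\theta)$ for $\theta\in\partial\Omega$, all proved in \cite{a}. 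Uniqueness of weak $L^1$ solutions then forces $u\equiv v$ on $\R^N$, and $u$ inherits the pointwise structure of $v$.

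The main obstacle is justifying the Fubini interchange and the pointwise evaluation of $\Ds$ on the Green potential under only local integrability of $f$ against $\delta^s$; both boil down to the kernel estimates for $G_\Omega$, $P_\Omega$, $M_\Omega$ near $\partial\Omega$, which have been assembled in \cite{a} precisely to match the present assumptions, so no new analytic input is required. Everything else is standard bookkeeping.
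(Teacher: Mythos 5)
Your proposal takes essentially the same route as the paper: both rest on the Green--Poisson--Martin representation of the weak $L^1$ solution established in \cite{a}, obtain $\Ds u=f$ pointwise in $\Omega$ from interior regularity of the Green potential of a $C^\alpha_{loc}$ right-hand side (cf.\ \cite{silvestre}), and reduce the verification of the singular boundary datum to showing that the Green potential $\int_\Omega G_\Omega(\cdot,y)\,f(y)\,dy$ has vanishing $\delta^{1-s}$-trace, with your explicit construction of $v$ followed by uniqueness being a logically equivalent repackaging of the paper's direct appeal to the representation formula for $u$. The one understatement is your closing claim that this last step requires ``no new analytic input'': the paper devotes a separate non-trivial Lemma~\ref{E0} to it, establishing a Ponce-type averaged trace bound through the two-regime kernel estimate \eqref{768}, which is genuinely new work in this paper and does not follow merely from recollecting the kernel estimates of \cite{a}.
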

\begin{proof}
We can write $u$ as the sum
\[
u(x)\ =\ \int_\Omega G_\Omega(x,y)\,f(y)\;dy +u_0(x)
\]
where $u_0$ is the $s$-harmonic function induced in $\Omega$ by the data $g$ and $h$.
For any $x\in\Omega$ it holds in a pointwise sense $\Ds u(x)=f(x)$ in view of the regularity of $f$
and the construction of the Green kernel. Then, to completely prove the lemma, it suffices to prove 
\[
\lim_{x\to\partial\Omega}\delta(x)^{1-s}\int_\Omega G_\Omega(x,y)\,f(y)\;dy\ =\ 0.
\]
This is proved in Lemma \ref{E0} below.
\end{proof}

\begin{lem}\label{point-weak} Take $f\in C^\alpha_{loc}(\Omega)$ for some $\alpha\in(0,1)$, $h\in C(\partial\Omega)$ and $u:\R^N\to\R$
a pointwise solution to
\begin{equation}\label{98}
\left\lbrace\begin{aligned}
\Ds u &= f & & \hbox{ in }\Omega \\
u &= g & &\hbox{ in }\C\Omega \\
Eu &= h & &\hbox{ on }\partial\Omega.
\end{aligned}\right.
\end{equation}
If
\[
\int_\Omega |f|\delta^s\ <\ +\infty,\qquad \int_{\C\Omega}|g|\delta^{-s}\min\{1,\delta^{-N-s}\}\ <\ +\infty,
\qquad h\in C(\partial\Omega)
\]
then $u$ is also a weak $L^1$ solution to the same problem.
\end{lem}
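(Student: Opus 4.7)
The plan is to identify $u$ with the weak $L^1$ solution produced by the Green--Poisson--Martin representation, and thereby transfer the weak formulation from the latter to $u$. This keeps the argument parallel to the proof of Lemma \ref{weak-point} and avoids a direct Fubini--symmetrization exercise on the principal-value kernel, which would be technically painful because $u$ can blow up like $\delta^{s-1}$ near $\partial\Omega$.

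Concretely, I would first define
\[
v(x)\ :=\ -\int_\Omega G_\Omega(x,y)\,f(y)\,dy\ +\ \int_{\C\Omega} P_\Omega(x,y)\,g(y)\,dy\ +\ \int_{\partial\Omega} M_\Omega(x,\theta)\,h(\theta)\,d\sigma(\theta),
\]
with the sign convention dictated by \cite{a}. The integrability hypotheses on $f,g$, the continuity of $h$, and the sharp estimates on $G_\Omega$, $P_\Omega$, $M_\Omega$ recalled in \cite{a} give $v\in L^1(\Omega)$, and the construction of these three kernels is precisely such that $v$ is a weak $L^1$ solution of \eqref{98} in the sense of Definition \ref{weakdefi1}. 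Since the hypotheses of Lemma \ref{weak-point} coincide with those of the present statement, $v$ is also a pointwise solution of \eqref{98}.

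Next I would set $w:=u-v$. By construction, $w\in L^1(\Omega)$, $w=0$ in $\C\Omega$, $\Ds w=0$ pointwise in $\Omega$, and $Ew\equiv 0$ on $\partial\Omega$. To conclude $w\equiv 0$ I would invoke a linear maximum principle for pointwise $s$-harmonic functions with vanishing exterior data and zero singular trace: in the Martin-type representation developed in \cite{a}, such a $w$ is an integral against its $E$-trace, so $Ew=0$ forces $w\equiv 0$. Applied to $\pm w$ this gives $u=v$, hence $u$ itself satisfies the weak formulation.

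The main obstacle is this last uniqueness step. Lemma \ref{comprinc} cannot be quoted directly because \emph{a priori} $w$ is only controlled through its $E$-trace and not pointwisely near $\partial\Omega$; one must rely on the Martin-type theory of \cite{a} to rule out nontrivial $s$-harmonic functions with zero exterior data and zero singular trace. An alternative, more self-contained route would be to regularize $u$ by a smooth boundary cutoff $u_\eps:=u\,\chi_{\{\delta>\eps\}}$, apply the standard symmetrization identity to $u_\eps$ and $\phi\in\T(\Omega)$ (where the Fubini interchange is legitimate since $u_\eps$ is bounded and supported away from $\partial\Omega$), and then pass to the limit $\eps\downarrow 0$ using the integrability conditions on $f$, $g$, $h$ to identify the boundary contributions.
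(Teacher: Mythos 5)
Your skeleton --- build the candidate $v$ from the Green--Poisson--Martin representation, observe via Lemma \ref{weak-point} that $v$ is both a weak $L^1$ and a pointwise solution, then argue $u-v\equiv 0$ --- is exactly the route the paper takes. The genuine gap is in the final uniqueness step, which you correctly flag as the main obstacle but then dispose of with the claim that ``such a $w$ is an integral against its $E$-trace, so $Ew=0$ forces $w\equiv 0$.'' Bogdan's Martin-type representation is a theorem about \emph{nonnegative} $\alpha$-harmonic functions; for a signed pointwise $s$-harmonic function with zero exterior data you cannot, a priori, write it as a Martin integral, and no ``linear maximum principle'' of the form you invoke is available off the shelf. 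This is precisely where the paper's proof does extra work: it sets $\Omega^+:=\{u>v\}$, notes that $u-v$ is nonnegative and $s$-harmonic on $\Omega^+$, and applies Bogdan's decomposition \emph{on $\Omega^+$} to write $u-v$ there as a Martin integral against $E_{\Omega^+}(u-v)$ plus a Poisson integral of $(u-v)|_{\C\Omega^+}$. The Martin piece vanishes because $E(u-v)=0$ on $\partial\Omega^+\cap\partial\Omega$ (from the prescribed trace) and $u-v$ is continuous in $\Omega$, hence has zero $E_{\Omega^+}$-trace on $\partial\Omega^+\cap\Omega$; the Poisson piece is $\leq 0$ since $u-v\leq 0$ on $\C\Omega^+$. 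This forces $\Omega^+=\emptyset$, and swapping $u$ and $v$ gives equality. Without this restriction to the positivity set, the argument does not close. Your proposed fallback (cutoff near $\partial\Omega$, symmetrize, pass to the limit) is a reasonable alternative but is only sketched; as written, the proof has a real hole at the step where you need $w\equiv 0$.
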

\begin{proof}
We refer to \cite[Theorem 1.2.8]{a} for the existence and uniqueness of a weak $L^1$ solution $v$ to problem \eqref{98}.
By Lemma \ref{weak-point}, $v$ is also a pointwise solution. Thus 
\begin{equation*}
\left\lbrace\begin{aligned}
\Ds (u-v) &=\ 0 &\hbox{ in }\Omega \\
u-v &=\ 0 &\hbox{ in }\C\Omega \\
E(u-v) &=\ 0 &\hbox{ on }\partial\Omega.
\end{aligned}\right.
\end{equation*}
in a pointwise sense. In particular, $u-v\in C(\Omega)$ since harmonic functions are smooth.
Define $\Omega^+:=\{x\in\Omega:u(x)>v(x)\}$: $u-v$ is a nonnegative $s$-harmonic function
and, by \cite[Lemma 5 and Theorem 1]{bogdan}, it decomposes into the sum of the $s$-harmonic function
induced by the $E_{\Omega^+}(u-v)$ trace and the one by its values on $\C\Omega^+$.
But, on the one hand $E_{\Omega^+}(u-v)=0$ on $\partial\Omega^+$ as it is implied 
by the singular trace datum in \eqref{98} and the continuity on $\partial\Omega^+\cap\Omega$,
while $u-v\leq 0$ in $\C\Omega^+$. This yields $\Omega^+=\emptyset$ and $v\geq u$ in $\Omega$.
Repeting the argument we deduce also $u\leq v$ and this completes the proof of the lemma.
\end{proof}

\begin{lem}\label{E0}
Let $f:\Omega\to\R$ be a continuous function such that
\begin{equation}\label{564}
\int_\Omega|f|\,\delta^s\ <\ +\infty.
\end{equation}
Then 
\begin{equation}\label{weaktrace}
\lim_{\eta\downarrow0}\left(\frac1\eta\int_{\{\delta(x)<\eta\}}\delta(x)^{1-s}\int_\Omega G_\Omega(x,y)\,f(y)\;dy\;dx\right)\ =\ 0.
\end{equation}
\end{lem}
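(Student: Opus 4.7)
The plan is to apply Fubini and reduce \eqref{weaktrace} to an estimate on the kernel
\[
K_\eta(y)\ :=\ \frac{1}{\eta}\int_{\{\delta(x)<\eta\}}\delta(x)^{1-s}\,G_\Omega(x,y)\,dx,\qquad y\in\Omega,
\]
since then the average inside the limit in \eqref{weaktrace} equals $\int_\Omega f(y)\,K_\eta(y)\,dy$, and it suffices to show $\int_\Omega |f(y)|\,K_\eta(y)\,dy\to 0$ as $\eta\downarrow 0$. I would prove this by dominated convergence with dominating function $C\,|f(y)|\,\delta(y)^s$, which lies in $L^1(\Omega)$ by \eqref{564}. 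The two required ingredients are the pointwise vanishing $K_\eta(y)\to 0$ for every $y\in\Omega$, and the uniform bound $K_\eta(y)\leq C\,\delta(y)^s$ for all $y\in\Omega$ and all $\eta>0$ sufficiently small.

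Pointwise vanishing is the easy part. For fixed $y\in\Omega$, the function $x\mapsto\delta(x)^{1-s}G_\Omega(x,y)$ is continuous on $\Omega\setminus\{y\}$, and since $G_\Omega(x,y)=O(\delta(x)^s)$ as $x\to\partial\Omega$, it extends continuously to $\overline\Omega$ with zero boundary value; hence its supremum on $\{\delta<\eta\}$ tends to zero, and since $|\{\delta<\eta\}|\asymp\eta$ the average $K_\eta(y)$ does too.

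The main work is the uniform bound. I would appeal to the classical two-sided estimate
\[
G_\Omega(x,y)\ \asymp\ \min\left\{|x-y|^{2s-N},\ \frac{\delta(x)^s\,\delta(y)^s}{|x-y|^N}\right\}
\]
available on a bounded $C^2$ domain, and split cases according to whether $\delta(y)\geq 2\eta$ or $\delta(y)<2\eta$. In the first (easy) regime, $|x-y|\gtrsim\delta(y)$ throughout $\{\delta(x)<\eta\}$, so the product bound applies globally; a coarea computation in the boundary strip, parametrising $x=\sigma+t\,\nu(\sigma)$ with $\sigma\in\partial\Omega$ and $t=\delta(x)\in(0,\eta)$, yields $\int_{\{\delta(x)<\eta\}}\delta(x)|x-y|^{-N}\,dx\lesssim\eta^2/\delta(y)$, whence $K_\eta(y)\lesssim\eta\,\delta(y)^{s-1}\leq\delta(y)^s$.

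In the delicate regime $\delta(y)<2\eta$, the $x$-integration region must be split further according to whether $|x-y|^2$ is below or above $\delta(x)\delta(y)$. On the near-diagonal portion, the bound $G_\Omega\lesssim|x-y|^{2s-N}$ together with the inequality $|x-y|\lesssim\delta(y)$ forced by that portion contributes $\lesssim\delta(y)^{1+s}$. On the off-diagonal portion, the product bound combined with a coarea argument in the thin strip contributes $\lesssim\delta(y)^s\,\eta$. Summing and dividing by $\eta$, and using $\delta(y)<2\eta$ to absorb the first piece, gives $K_\eta(y)\lesssim\delta(y)^s$ in this regime as well. The subtle step I expect to be the main obstacle is exactly this off-diagonal integral: a naive free-space bound $\int_{|x-y|>\eta}|x-y|^{-N}\,dx\asymp\log(1/\eta)$ would introduce a spurious logarithmic loss incompatible with the hypothesis $|f|\,\delta^s\in L^1$, and avoiding it demands a careful exploitation of the codimension-one geometry of $\{\delta<\eta\}$ via the boundary-normal parametrisation rather than a bulk estimate in $\R^N$.
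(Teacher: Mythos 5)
Your proposal is correct and follows essentially the same strategy as the paper's proof: apply Fubini to reduce to the kernel bound \eqref{768}, obtain that bound by invoking the two-sided sharp Green-function estimate together with a boundary-strip (coarea/normal-coordinate) computation, and then conclude by dominated convergence against $|f|\,\delta^s\in L^1(\Omega)$. The only differences are organisational: the paper handles the far regime $\delta(y)\geq\eta$ by quoting a boundary-regularity estimate for the Green operator (yielding the softer bound $C\eta^\sigma\delta(y)^{s-\sigma}$ for any fixed $\sigma\in(0,s)$ and then passing to the limit with a combination of dominated convergence and absolute continuity of $\int|f|\delta^s$), whereas you compute that regime directly from the product form of the Green bound, getting the cleaner uniform bound $K_\eta(y)\lesssim\delta(y)^s$ and a single application of dominated convergence. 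Your identification of the delicate point is accurate; be slightly careful in the near-boundary regime that the exclusion $|x-y|^2>\delta(x)\delta(y)$ — not merely the thin-strip geometry — is what removes the would-be logarithmic singularity of $\int\delta(x)\,|x-y|^{-N}\,dx$ at $x=y$, which is the precise role of the paper's use of $a\wedge b\leq 2ab/(a+b)$ in its local-flattening computation.
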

\begin{proof}
Equation \eqref{weaktrace} expresses a notion of weak trace at the boundary 
introduced by Ponce \cite[Proposition 3.5]{ponce}.
Choose $\eta>0$ small and consider the integral
\begin{equation}\label{7483}
\frac1\eta\int_\Omega\delta(x)^{1-s}\chi_{(0,\eta)}(\delta(x))\int_\Omega G_\Omega(x,y)\,f(y)\;dy\;dx.
\end{equation}
We are going to show that it converges to $0$ as $\eta\downarrow 0$.
By splitting $f$ into its positive and negative part,
we can suppose $f\geq 0$ without loss of generality.
Fix $\sigma\in(0,s)$ and exchange the order of integration in \eqref{7483}. Our claim is that
\begin{equation}\label{768}
\int_\Omega G_\Omega(x,y)\,\delta(x)^{1-s}\chi_{(0,\eta)}(\delta(x))\;dx\ \leq\ 
\left\lbrace\begin{aligned}
& C\eta^{1+\sigma}\delta(y)^{s-\sigma} & \hbox{ if }\delta(y)\geq\eta \\
& C\eta\delta(y)^s & \hbox{ if }\delta(y)<\eta.
\end{aligned}\right.
\end{equation}
This would prove
\begin{multline*}
\frac1\eta\int_\Omega f(y)\int_\Omega G_\Omega(x,y)\,\delta(x)^{1-s}\chi_{(0,\eta)}(\delta(x))\;dx\;dy\ \leq \\
\leq\ C\eta^\sigma\int_{\{\delta(y)\geq\eta\}\cap\Omega}f(y)\delta(y)^{s-\sigma}\;dy+C\int_{\{\delta(y)<\eta\}\cap\Omega}f(y)\delta(y)^s\;dy
\end{multline*}
where the second addend converges to $0$ as $\eta\downarrow 0$ by \eqref{564}.
For the first addend, we have that $\eta^\sigma f(y)\delta(y)^{s-\sigma}$ converges pointwisely to zero
for any $y\in\Omega$ and $\eta^\sigma f(y)\delta(y)^{s-\sigma}\leq f(y)\delta(y)^s$
if $y\in\Omega\cap\{\delta(y)>\eta\}$, therefore we have the convergence to $0$ by dominated convergence.

We turn now to the proof of \eqref{768}. For any $y\in\Omega$ one has 
\begin{equation}\label{654}
\int_\Omega G_\Omega(x,y)\,\delta(x)^{1-s}\chi_{(0,\eta)}(\delta(x))\;dx\leq
\eta^{1+\sigma}\int_\Omega G_\Omega(x,y)\,\delta(x)^{-s-\sigma}\;dx\leq
C\eta^{1+\sigma}\delta(y)^{s-\sigma}
\end{equation}
where we have used the regularity at the boundary in \cite[Proposition 1.2.9]{a}.
In particular \eqref{654} holds when $\delta(y)>\eta$. 

To prove the other part of \eqref{768} we write 
(dropping from now on multiplicative constants depending on $N,\Omega$ and $s$)
\[
\int_\Omega G_\Omega(x,y)\,\delta(x)^{1-s}\chi_{(0,\eta)}(\delta(x))\;dx
\leq\ \eta^{1-s}\int_{\{\delta(x)<\eta\}\cap\Omega}\frac{\left(\delta(x)\delta(y)\wedge|x-y|^2\right)^s}{{|x-y|}^N}\delta(x)^{1-s}dx
\]
and we are allowed to perform 
the computations only in the case where $\partial\Omega$ is locally flat
where the above reads as
\[
\int_0^\eta\int_B\frac{\left[x_Ny_N\wedge(|x'-y'|^2+|x_N-y_N|^2)\right]^s}{{(|x'-y'|^2+|x_N-y_N|^2)}^{N/2}}\cdot x_N^{1-s}\;dx'\:dx_N.
\]
where $x=(x',x_N)\in\R^{N-1}\times\R$ and $y=(y',y_N)\in\R^{N-1}\times\R$.
First note that we can suppose $y'=0$ without loss of generality 
and $a\wedge b\leq 2ab/(a+b)$ when $a,b>0$. With the change of variable 
$x_N=y_N\,t$ and $x'=y_N\,\xi$, we reduce to
\[
y_N^{1+s}\int_0^{\eta/y_N}\int_{B_{1/y_N}}\frac{t}{\left(|\xi|^2+|t-1|^2\right)^{N/2-s}}
\cdot\frac{d\xi}{\left(|\xi|^2+|t-1|^2+t\right)^s}\;dt
\]
and, passing to polar coordinates in the $\xi$ variable
\begin{multline*}
y_N^{1+s}\int_0^{\eta/y_N}\int_0^{1/y_N}\frac{t\,\rho^{N-2}}{\left(\rho^2+|t-1|^2\right)^{N/2-s}}
\cdot\frac{d\rho}{\left(\rho^2+|t-1|^2+t\right)^s}\;dt\ \leq\\
\leq\ y_N^{1+s}\int_0^{\eta/y_N}\int_0^{1/y_N}\frac{t\,\rho}{\left(\rho^2+|t-1|^2\right)^{3/2-s}}
\cdot\frac{d\rho}{\left(\rho^2+|t-1|^2+t\right)^s}\;dt.
\end{multline*}
We deal first with the integral in the $\rho$ variable\footnote{The
computation which follows is not valid in the particular case $s=1/2$,
but with some minor natural modifications the same idea will work.}
\begin{align*}
& t\int_0^{1/y_N}\frac{\rho}{\left(\rho^2+|t-1|^2\right)^{3/2-s}}
\cdot\frac{d\rho}{\left(\rho^2+|t-1|^2+t\right)^s}\ \leq \\
& \leq\ \frac{t}{\left(|t-1|^2+t\right)^s}\int_0^1\frac{\rho}{\left(\rho^2+|t-1|^2\right)^{3/2-s}}\;d\rho+
t\int_1^{1/y_N}\frac{\rho}{\left(\rho^2+|t-1|^2\right)^{3/2}}\;d\rho \\
& \leq\ \frac{t}{\left(|t-1|^2+t\right)^s}\cdot\left.\frac{\left(\rho^2+|t-1|^2\right)^{-1/2+s}}{2s-1}\right|_{\rho=0}^1+
\frac{t}{\left(1+|t-1|^2\right)^{1/2}}.
\end{align*}
We then have
\begin{multline*}
t\int_0^{1/y_N}\frac{\rho}{\left(\rho^2+|t-1|^2\right)^{3/2-s}}
\cdot\frac{d\rho}{\left(\rho^2+|t-1|^2+t\right)^s}\ \leq\\
\leq\ \left\lbrace\begin{aligned}
& \frac{t}{\left(|t-1|^2+t\right)^s\,\left|t-1\right|^{1-2s}}+\frac{t}{\left(1+|t-1|^2\right)^{1/2}} & s\in(0,1/2), \\
& \frac{t\,\left(1+|t-1|^2\right)^{s-1/2}}{\left(|t-1|^2+t\right)^s}+\frac{t}{\left(1+|t-1|^2\right)^{1/2}} & s\in(1/2,1).
\end{aligned}\right.
\end{multline*}
The two quantities are both integrable in $t=1$
and converge to a positive constant as $t\uparrow+\infty$, therefore
\[
y_N^{1+s}\int_0^{\eta/y_N}\int_0^{1/y_N}\frac{t\,\rho^{N-2}}{\left(\rho^2+|t-1|^2\right)^{N/2-s}}
\cdot\frac{d\rho}{\left(\rho^2+|t-1|^2+t\right)^s}\;dt\ \leq\ 
\eta\,y_N^s=\eta\,\delta(y)^s
\]
completing the proof of \eqref{768}.
\end{proof}

\subsection{Variational weak solutions vs. weak \texorpdfstring{$L^1$}{L1} solutions}

In this paragraph we are going to prove the equivalence - for some class of Dirichlet problems -
between the definition of weak $L^1$ solution and the more standard one 
of variational weak solution.

\begin{defi} Given $f\in L^\infty(\Omega)$, a variational weak solution of
\begin{equation}\label{345435345}
\left\lbrace\begin{aligned}
\Ds u &= f & & \hbox{in }\Omega \\
u &= 0 & & \hbox{in }\C\Omega \\
\end{aligned}\right.
\end{equation}
is a function $u\in H^s(\R^N)$ such that $u\equiv 0$ in $\C\Omega$ and
for any other $v\in H^s(\R^N)$ such that $v\equiv 0$ in $\C\Omega$ it holds
\[
\int_{\R^N}{(-\lapl)}^{s/2}u\,{(-\lapl)}^{s/2}v\ =\ \int_\Omega f\,v.
\]
\end{defi}

\begin{lem}\label{644351} Recall the definition of the space $\T(\Omega)$ given in Definition \ref{weakdefi1}.
It holds $\T(\Omega)\subset H^s(\R^N)$.
\end{lem}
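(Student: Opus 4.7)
The goal is to show $\phi\in L^2(\R^N)$ and $[\phi]_{H^s(\R^N)}<+\infty$ for every $\phi\in\T(\Omega)$. The $L^2$ part is free: $\phi\in C^s(\R^N)$ is continuous and vanishes outside the bounded set $\Omega$, so $\phi\in L^\infty(\R^N)$ with compact support, hence $\phi\in L^2(\R^N)$.

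The core of the argument is to show that $\Ds\phi$ is in fact globally bounded on $\R^N$ and then recover the Gagliardo seminorm by a symmetrization trick. On $\Omega$ we have $\Ds\phi\in C^\infty_c(\Omega)$ by assumption, so it is bounded. On $\C\Omega$, since $\phi(x)=0$ there, the defining integral reduces to
\[
\Ds\phi(x)\ =\ -\A(N,s)\int_\Omega \frac{\phi(y)}{|x-y|^{N+2s}}\;dy,
\]
which is continuous in $x\in\C\Omega$ and even decays like $|x|^{-N-2s}$ at infinity; in particular $\Ds\phi\in L^\infty(\R^N)$. Since $\phi$ is bounded with compact support and $\Ds\phi$ is bounded, the product $\phi\cdot\Ds\phi$ is integrable and
\[
I:=\int_{\R^N}\phi\cdot\Ds\phi\;dx\ <\ +\infty.
\]

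Next I recover $[\phi]_{H^s}^2$ from $I$ via the standard principal-value symmetrization. Using the truncated definition in \eqref{fr-lapl} and the dominated convergence theorem (justified by $|\phi(x)\Ds\phi(x)|$ being pointwise bounded by an integrable function, via $\Ds\phi\in L^\infty$ and $\phi$ compactly supported) we get
\[
I\ =\ \A(N,s)\,\lim_{\eps\downarrow0}\int_{\R^N}\int_{|x-y|>\eps}\phi(x)\,\frac{\phi(x)-\phi(y)}{|x-y|^{N+2s}}\;dy\;dx.
\]
For every fixed $\eps>0$ the integrand is absolutely integrable (the kernel is bounded by $\eps^{-N-2s}$ off the diagonal and $\phi$ is bounded with compact support), so Fubini applies; swapping and averaging the integral with its symmetric counterpart in $(x,y)$ gives
\[
I\ =\ \frac{\A(N,s)}{2}\,\lim_{\eps\downarrow0}\int\!\!\int_{|x-y|>\eps}\frac{(\phi(x)-\phi(y))^2}{|x-y|^{N+2s}}\;dx\;dy\ =\ \frac{\A(N,s)}{2}\,[\phi]_{H^s(\R^N)}^2,
\]
where the last equality is monotone convergence of a nonnegative integrand as $\eps\downarrow0$. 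Thus $[\phi]_{H^s(\R^N)}^2=2I/\A(N,s)<+\infty$, which combined with $\phi\in L^2(\R^N)$ gives $\phi\in H^s(\R^N)$.

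The main technical point is the boundedness of $\Ds\phi$ across $\partial\Omega$: on the inside we use the hypothesis $\Ds\phi|_\Omega\in C^\infty_c(\Omega)$, while on the outside we exploit the vanishing of $\phi$ to rewrite $\Ds\phi$ as a convergent integral with no principal value. Once this is in hand, the passage from $\int\phi\,\Ds\phi$ to the Gagliardo seminorm is a routine Fubini-plus-monotone convergence argument, and the inclusion follows.
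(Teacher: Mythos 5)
Your overall strategy is genuinely different from the paper's: the paper works with the half\-power operator, asserting that ${(-\lapl)}^{s/2}\phi$ is continuous, decays like $|x|^{-N-s}$ and hence is in $L^2(\R^N)$, and then cites the known identity $\|{(-\lapl)}^{s/2}\phi\|_{L^2}^2\asymp[\phi]_{H^s}^2$ from the Hitchhiker's guide. You instead go directly through the bilinear form $\int\phi\,\Ds\phi$ and recover the Gagliardo seminorm by Fubini plus symmetrization, which is a self\-contained route that avoids invoking the half\-power operator. Both roads can lead to the result, but as written your argument has a real gap.

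The problem is the claim that $\Ds\phi\in L^\infty(\R^N)$. For a generic $\phi\in\T(\Omega)$ one has $\phi\asymp\delta^s$ near $\partial\Omega$ from inside (this is the sharp boundary regularity for the Dirichlet problem with smooth compactly supported right-hand side), and for $x\in\C\Omega$ approaching $\partial\Omega$ the representation $\Ds\phi(x)=-\A(N,s)\int_\Omega\phi(y)\,|x-y|^{-N-2s}\,dy$ then blows up like $\delta(x)^{-s}$: a direct computation with $\phi(y)\sim\dist(y,\partial\Omega)^s$ gives $\int_\Omega\dist(y,\partial\Omega)^s\,|x-y|^{-N-2s}\,dy\asymp\delta(x)^{-s}$. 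So $\Ds\phi$ is \emph{not} globally bounded. The finiteness of $I=\int\phi\,\Ds\phi$ still survives, because $\phi$ vanishes exactly where $\Ds\phi$ explodes, but this is not the justification you give.

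The more serious consequence is in the dominated-convergence step. You need to pass to the limit in $\int_\Omega\phi(x)\,\Ds_\eps\phi(x)\,dx$ as $\eps\downarrow0$, and for that you need a dominating function for $x\mapsto\phi(x)\,\Ds_\eps\phi(x)$ that is \emph{uniform in $\eps$}. Boundedness of the limiting object $\phi\,\Ds\phi$ is not enough: the truncated operator $\Ds_\eps\phi(x)$ is not pointwise controlled by $\Ds\phi(x)$, and near $\partial\Omega$ (say when $\delta(x)<2\eps$) the truncation produces a term of size roughly $\delta(x)^{-s}$ which must be paired with $|\phi(x)|\lesssim\delta(x)^s$ to get integrability. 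Establishing such a uniform-in-$\eps$ bound on $\Ds_\eps\phi$ near the boundary is precisely the nontrivial content of the paper's subsequent lemma that $\T(\Omega)\subset\mathbb{X}_s$ (see the inequality $\delta(x)^s|\Ds_\eps\phi(x)|\leq\delta(x)^s|\psi(x)|+C\|\psi\|_{C^\alpha}\delta(x)^{\alpha-s}+C\|\phi\|_{C^s}$ proved there). Your argument implicitly needs that estimate but neither proves it nor cites it. If you add that ingredient, the Fubini and monotone-convergence parts of your argument are fine and the symmetrization identity $\int\phi\,\Ds\phi=\tfrac{\A(N,s)}{2}[\phi]_{H^s}^2$ is correct.
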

\begin{proof}
Consider $\phi\in\T(\Omega)$. The fractional Laplacian ${(-\lapl)}^{s/2}\phi$
is a continuous function decaying like $|x|^{-N-s}$ at infinity.
So $\|{(-\lapl)}^{s/2}\phi\|_{L^2(\R^N)}<\infty$ and we can apply \cite[Proposition 3.6]{hitchhiker}.
\end{proof}

\begin{prop} Let $f\in L^\infty(\Omega)$. Let $u$ be a variational
weak solution of \eqref{345435345}. Then it is also a weak $L^1$ solution
to the problem
\[
\left\lbrace\begin{aligned}
\Ds u &= f & & \hbox{in }\Omega \\
u &= 0 & & \hbox{in }\C\Omega \\
Eu &= 0 & & \hbox{on }\partial\Omega.
\end{aligned}\right.
\]
\end{prop}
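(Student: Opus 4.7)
The plan is to verify directly the two requirements in Definition \ref{weakdefi1} with $g\equiv 0$ and $h\equiv 0$: namely, $u\in L^1(\Omega)$ and
\[
\int_\Omega u\,\Ds\phi\ =\ \int_\Omega f\phi, \qquad \text{for every } \phi\in\T(\Omega).
\]
The integrability is immediate: since $u\in H^s(\R^N)\hookrightarrow L^2(\R^N)$ and $u\equiv 0$ outside the bounded set $\Omega$, we have $u\in L^2(\Omega)\subset L^1(\Omega)$.

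For the test-function identity, I would fix $\phi\in\T(\Omega)$. By Lemma \ref{644351}, $\phi\in H^s(\R^N)$, and by definition $\phi\equiv 0$ in $\C\Omega$; thus $\phi$ is an admissible test function in the variational formulation, which yields
\[
\int_{\R^N}{(-\lapl)}^{s/2}u\,{(-\lapl)}^{s/2}\phi\ =\ \int_\Omega f\phi.
\]
The core of the proof is then rewriting the left-hand side as $\int_\Omega u\,\Ds\phi$. First I would observe that $\Ds\phi$ is pointwise well-defined everywhere: inside $\Omega$ by the assumption $\Ds\phi|_\Omega\in C^\infty_c(\Omega)$, and outside $\Omega$ by a direct computation using $\phi\equiv 0$ in $\C\Omega$, which gives
\[
\Ds\phi(x)\ =\ -\A(N,s)\int_\Omega\frac{\phi(y)}{|x-y|^{N+2s}}\;dy,\qquad x\in\C\Omega,
\]
a bounded function decaying like $|x|^{-N-2s}$ at infinity. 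In particular $\Ds\phi\in L^1(\R^N)\cap L^\infty(\R^N)\subset L^2(\R^N)$, so all the integrals I am about to manipulate are absolutely convergent.

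Next I would apply Plancherel's theorem. Since $\Ds\phi\in L^1(\R^N)$, its Fourier transform coincides with $|\xi|^{2s}\hat\phi(\xi)$ in the distributional sense, and therefore
\[
\int_{\R^N}{(-\lapl)}^{s/2}u\,{(-\lapl)}^{s/2}\phi\ =\ \int_{\R^N}|\xi|^s\hat u(\xi)\cdot|\xi|^s\overline{\hat\phi(\xi)}\;d\xi\ =\ \int_{\R^N}u\,\Ds\phi.
\]
Using again $u\equiv 0$ in $\C\Omega$ reduces the right-hand side to $\int_\Omega u\,\Ds\phi$, and combining with the variational identity closes the proof.

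The main obstacle is the Plancherel step, which requires justifying that ${(-\lapl)}^{s/2}u$ and ${(-\lapl)}^{s/2}\phi$ are honest $L^2(\R^N)$ factors of the products $u\,\Ds\phi$ (both of which involve the lower-regularity function $\phi\in C^s$ rather than a Schwartz test). A safe route, if one wants to avoid Fourier analysis directly, is to invoke the bilinear representation
\[
\int_{\R^N}{(-\lapl)}^{s/2}u\,{(-\lapl)}^{s/2}\phi\ =\ \frac{\A(N,s)}{2}\int_{\R^N}\!\!\int_{\R^N}\frac{(u(x)-u(y))(\phi(x)-\phi(y))}{|x-y|^{N+2s}}\;dx\,dy,
\]
valid for pairs in $H^s(\R^N)$, and then apply Fubini together with the symmetry argument that turns the double integral into $\int_{\R^N}u\,\Ds\phi$; the $L^1\cap L^\infty$ bound on $\Ds\phi$ just obtained provides the integrability needed to move the principal value safely through Fubini.
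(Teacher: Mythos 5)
Your proposal is correct and follows the same route as the paper: invoke Lemma \oldstylenums{25} to place $\phi\in H^s(\R^N)$ so that it is an admissible variational test function, then identify $\int_{\R^N}{(-\lapl)}^{s/2}u\,{(-\lapl)}^{s/2}\phi$ with $\int_\Omega u\,\Ds\phi$. The paper states this identity without justification, while you carefully supply the missing details (the $L^1\cap L^\infty$ bound on $\Ds\phi$, the Plancherel or Gagliardo-form argument, and the reduction of $\int_{\R^N}$ to $\int_\Omega$ using $u\equiv 0$ in $\C\Omega$), so in effect you have written out the proof the paper only sketches.
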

\begin{proof}
Consider $\phi\in\T(\Omega)$:
\[
\int_\Omega u\Ds\phi=\int_{\R^N}{(-\lapl)}^{s/2}u\,{(-\lapl)}^{s/2}\phi=\int_\Omega f\phi.
\]
where we have used Lemma \ref{644351} on $\phi$.
\end{proof}

\begin{prop} Let $f\in L^\infty(\Omega)$. Let $u$ be a weak $L^1$ solution
to the problem
\[
\left\lbrace\begin{aligned}
\Ds u &= f & & \hbox{in }\Omega \\
u &= 0 & & \hbox{in }\C\Omega \\
Eu &= 0 & & \hbox{on }\partial\Omega.
\end{aligned}\right.
\]
Then it is also a variational
weak solution of \eqref{345435345}. 
\end{prop}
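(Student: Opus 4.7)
The plan is to reduce the statement to the previous proposition together with uniqueness of weak $L^1$ solutions, rather than showing directly that our weak $L^1$ solution $u$ lies in $H^s(\R^N)$.

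First I would invoke the standard Hilbert space theory to produce a variational weak solution $\widetilde u$ of \eqref{345435345}: since $f \in L^\infty(\Omega) \subset L^2(\Omega)$, the bilinear form
\[
a(u,v) = \int_{\R^N} (-\lapl)^{s/2}u\,(-\lapl)^{s/2}v
\]
is coercive and continuous on the Hilbert space $\{v \in H^s(\R^N) : v \equiv 0 \text{ in } \C\Omega\}$ (fractional Poincar\'e inequality on a bounded domain), so Lax--Milgram applied to $v \mapsto \int_\Omega f v$ gives a unique $\widetilde u$ in that space satisfying the variational identity.

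Next I would apply the proposition proved immediately above to conclude that this $\widetilde u$ is also a weak $L^1$ solution to the problem with homogeneous exterior and singular trace data. At this point both $u$ and $\widetilde u$ are weak $L^1$ solutions to the same Dirichlet problem with data $(f, 0, 0)$; invoking the uniqueness part of \cite[Theorem~1.2.8]{a} (already used in the proof of Lemma~\ref{point-weak}), we get $u = \widetilde u$ a.e. in $\R^N$. In particular $u \in H^s(\R^N)$, $u \equiv 0$ in $\C\Omega$, and $u$ satisfies the variational identity; thus $u$ is a variational weak solution of \eqref{345435345}.

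The only delicate point is making sure the uniqueness class of weak $L^1$ solutions contains both objects: the variational solution is produced in $H^s(\R^N)$, which sits inside $L^2(\Omega) \subset L^1(\Omega)$ by boundedness of $\Omega$, and its trivial singular trace has already been checked in the previous proposition, so both are admissible competitors for the uniqueness statement in \cite{a}. Hence no direct Sobolev-regularity computation for $u$ is needed; everything is routed through the uniqueness of the weak $L^1$ formulation.
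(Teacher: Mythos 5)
Your proposal is correct and follows essentially the same route as the paper: construct the variational weak solution, apply the preceding proposition to see it is also a weak $L^1$ solution, and invoke uniqueness of weak $L^1$ solutions to identify it with $u$. You merely flesh out the existence step via Lax--Milgram and the membership of the variational solution in $L^1(\Omega)$, both of which the paper leaves implicit.
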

\begin{proof}
Call $w$ the variational weak solution of \eqref{345435345}. 
By the previous Lemma, $w$ is also a weak $L^1$ solution.
We must conclude $u=w$ by the uniqueness of a weak $L^1$ solution.
\end{proof}

\subsection{The test function space}

In \cite{chen-veron} the following definition of weak solution is given.

\begin{defi}\label{weakdefi2} Given a Radon measure $\nu$ such that $\delta^s\in L^1(\Omega,d\nu)$
a function $u\in L^1(\Omega)$ is a weak solution of
\[
\left\lbrace\begin{aligned}
\Ds u+ f(u) &=\ \nu & \hbox{ in }\Omega \\
u &=\ 0 & \hbox{ in }\C\Omega
\end{aligned}\right.
\]
if $f(u)\in L^1(\Omega,\delta^s\,dx)$
\[
\int_\Omega u\Ds\xi+\int_\Omega f(u)\xi\ =\ \int_\Omega \xi\;d\nu
\] 
for any $\xi\in\mathbb{X}_s\subset C(\R^N)$, i.e. 
\begin{enumerate}
\item supp$\xi\subseteq\overline{\Omega}$
\item $\Ds\xi(x)$ is pointwisely defined for any $x\in\Omega$ and $\|\Ds\xi\|_{L^\infty(\Omega)}<+\infty$ 
\item there exist a positive $\varphi\in L^1(\Omega,\delta^s dx)$ and $\eps_0>0$ such that
\[
|\Ds_\eps\xi(x)|=\left|\int_{\C B_\eps(x)}\frac{\xi(x)-\xi(y)}{|x-y|^{N+2s}}\;dx\right|\leq \varphi(x)
\qquad \hbox{ for all }\eps\in(0,\eps_0].
\]
\end{enumerate}
\end{defi}

The test space $\mathbb{X}_s$ in Definition \ref{weakdefi2} is quite different 
from the space $\T(\Omega)$ which is used in Definition \ref{weakdefi1}.
Still, testing a Dirichlet problem against one or the other does not yield two different solutions,
i.e. the two notions of weak $L^1$ solutions are equivalent.
We split the proof of this fact into two lemmata.

\begin{lem} $\T(\Omega)\subset\mathbb{X}_s$.
\end{lem}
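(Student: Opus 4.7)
The plan is to verify the three defining conditions of $\mathbb{X}_s$ for an arbitrary $\phi\in\T(\Omega)$. The first two are built into the definition of $\T(\Omega)$: since $\phi\equiv 0$ on $\C\Omega$, its support lies in $\overline{\Omega}$, and since $\Ds\phi|_\Omega\in C^\infty_c(\Omega)$, the value $\Ds\phi(x)$ is pointwisely defined for every $x\in\Omega$ and $\|\Ds\phi\|_{L^\infty(\Omega)}<+\infty$.

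The core of the argument is condition (3): constructing $\eps_0>0$ and a positive $\varphi\in L^1(\Omega,\delta^s\,dx)$ dominating $|\Ds_\eps\phi|$ uniformly in $\eps\in(0,\eps_0]$. I would aim for $\varphi(x)=C\bigl(1+\delta(x)^{-2s}\bigr)$, which lies in $L^1(\Omega,\delta^s\,dx)$ since $\int_\Omega\delta^{-s}\,dx<+\infty$ when $s\in(0,1)$ and $\partial\Omega\in C^2$. Two auxiliary bounds on $\phi$ would drive the estimate. On the one hand, the global H\"older regularity and the vanishing of $\phi$ on $\partial\Omega$ yield the boundary decay $|\phi(x)|\le C\,\delta(x)^s$. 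On the other hand, since $\Ds\phi\in C^\infty_c(\Omega)$ and $\phi\in L^\infty(\R^N)$, a rescaling of Silvestre-type interior Schauder estimates for $\Ds$ on the ball $B_{\delta(x)/4}(x)$ provides $\|D^2\phi\|_{L^\infty(B_{\delta(x)/4}(x))}\le C\,\delta(x)^{-2}$ for $x$ close to $\partial\Omega$.

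With these in hand, I would fix $\eps_0$ small and split on the value of $\delta(x)$. Whenever $\delta(x)\ge\delta_0$ for some fixed threshold $\delta_0>0$, interior smoothness of $\phi$ around $x$ yields $|\Ds_\eps\phi(x)|\le C$ uniformly in $\eps$. When instead $\delta(x)<\delta_0$, I would split the principal-value integrand at the scale $|y-x|=\delta(x)/4$. On the outer region, both $|\phi(x)|\le C\delta(x)^s$ and $\|\phi\|_{L^\infty(\R^N)}<+\infty$ deliver a contribution bounded by $C\,\delta(x)^{-2s}$. On the inner annulus $\{\eps<|y-x|<\delta(x)/4\}$---which is empty as soon as $\eps\ge\delta(x)/4$---I would Taylor-expand $\phi$ at $x$: the first-order term $\nabla\phi(x)\cdot(y-x)$ integrates to zero by the radial symmetry of the annulus, while the quadratic remainder is controlled by
\[
C\,\|D^2\phi\|_{L^\infty(B_{\delta(x)/4}(x))}\int_\eps^{\delta(x)/4}\rho^{1-2s}\,d\rho\ \le\ C\,\delta(x)^{-2}\cdot\delta(x)^{2-2s}\ =\ C\,\delta(x)^{-2s},
\]
independently of $\eps$. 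In the complementary regime $\eps\ge\delta(x)/4$ the bound $|\Ds_\eps\phi(x)|\le C\eps^{-2s}\le C\delta(x)^{-2s}$ closes the case.

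The main obstacle I anticipate is precisely the uniform-in-$\eps$ control of the inner annulus contribution: the bare regularity $\phi\in C^s(\R^N)$ would only produce the divergent quantity $\eps^{-s}$, so one really has to combine the interior $C^2$-bound---available only thanks to $\Ds\phi\in C^\infty_c(\Omega)$---with the antisymmetric cancellation of the first-order Taylor term on the symmetric annulus. Once this uniform pointwise bound is secured, the $L^1(\Omega,\delta^s\,dx)$-integrability of $\varphi$ follows from $s<1$ and the inclusion $\T(\Omega)\subset\mathbb{X}_s$ is established.
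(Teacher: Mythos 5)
Your proposal is correct and follows essentially the same strategy as the paper's proof: verify conditions (1)--(2) trivially, then for condition (3) split the truncated integral around the dyadic scale $\delta(x)$, use a rescaled Silvestre interior estimate on balls $B_{c\delta(x)}(x)$ to control the near-field (with the first-order term cancelling by symmetry), and use the global Hölder/boundedness of $\phi$ for the far-field, producing a dominating function of order $\delta(x)^{-2s}$, which lies in $L^1(\Omega,\delta^s\,dx)$. The only cosmetic differences are that the paper first rewrites $\Ds_\eps\phi(x)=\Ds\phi(x)-PV\int_{B_\eps(x)}$ and works with the $C^{2s+\alpha}$ Schauder norm, whereas you estimate $\int_{\C B_\eps(x)}$ directly with a $C^2$ interior bound; both rest on the same rescaling argument and give the same $\delta^{-2s}$-type control.
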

\begin{proof} Pick $\phi\in\T(\Omega)$.
Properties {\it 1.} and {\it 2.} of Definition \ref{weakdefi2} are satisfied by construction.
In order to prove {\it 3.} write for $\delta(x)<2\eps$
\begin{multline}\label{111}
\Ds_\eps\phi(x)=\psi(x)-PV\int_{B_\eps(x)}\frac{\phi(x)-\phi(y)}{|x-y|^{N+2s}}\;dy\ =\\
=\ \psi(x)-PV\int_{B_{\delta(x)/2}(x)}\frac{\phi(x)-\phi(y)}{|x-y|^{N+2s}}\;dy
-\int_{B_\eps(x)\setminus B_{\delta(x)/2}(x)}\frac{\phi(x)-\phi(y)}{|x-y|^{N+2s}}\;dy.
\end{multline}
with $\psi:=\Ds\phi|_\Omega\in C^\infty_c(\Omega)$.
Consider $\alpha\in(0,s)$. For the first integral
\begin{align*}
& \left|PV\int_{B_{\delta(x)/2}(x)}\frac{\phi(x)-\phi(y)}{|x-y|^{N+2s}}\;dy\right|\ 
 \leq\ \|\phi\|_{C^{2s+\alpha}(B_{\delta(x)/2}(x))}\int_{B_{\delta(x)/2}(x)}\frac{dy}{|x-y|^{N-\alpha}}\ \\
& =\ \|\phi\|_{C^{2s+\alpha}(B_{\delta(x)/2}(x))}\frac{\omega_{N-1}}{\alpha}\left(\frac{\delta(x)}{2}\right)^\alpha
\end{align*}
where, by \cite[Proposition 2.8]{silvestre}
\begin{align*}
& \|\phi\|_{C^{2s+\alpha}(B_{\delta(x)/2}(x))}=2^{2s+\alpha}\delta(x)^{-2s-\alpha}\left\|\phi\left(x+\frac{\delta(x)}{2}\ \cdot\right)\right\|_{C^{2s+\alpha}(B)}\ \leq \\
& \leq\ C\delta(x)^{-2s-\alpha}\left(\left\|\phi\left(x+\frac{\delta(x)}{2}\ \cdot\right)\right\|_{L^\infty(B)}+
\left\|\psi\left(x+\frac{\delta(x)}{2}\ \cdot\right)\right\|_{C^{\alpha}(B)}\right) \\
& \leq\ C\delta(x)^{-2s-\alpha}\left(\|\phi\|_{L^\infty(B_{\delta(x)/2}(x))}
+\delta(x)^\alpha\|\psi\|_{C^{\alpha}(B_{\delta(x)/2}(x))}\right) \\
& \leq\ C\delta(x)^{-2s-\alpha}\left(\|\psi\|_{L^\infty(\R^N)}\delta(x)^s
+\delta(x)^\alpha\|\psi\|_{C^{\alpha}(\R^N)}\right)\ 
\leq\ C\|\psi\|_{C^\alpha(\R^N)}\delta(x)^{-2s}.
\end{align*}
The integration far from $x$ gives
\begin{align*}
& \left|\int_{B_\eps(x)\setminus B_{\delta(x)/2}(x)}\frac{\phi(x)-\phi(y)}{|x-y|^{N+2s}}\;dy\right|\ 
 \leq\ \|\phi\|_{C^s(\R^N)} \int_{B_\eps(x)\setminus B_{\delta(x)/2}(x)}\frac{dy}{|x-y|^{N+s}}\ \leq \\ 
& \leq\ \|\phi\|_{C^s(\R^N)} \int_{\R^N\setminus B_{\delta(x)/2}}\frac{dz}{|z|^{N+s}}\ 
 \leq\ \|\phi\|_{C^s(\R^N)} \frac{\omega_{N-1}}{s}\left(\frac2{\delta(x)}\right)^s.
\end{align*}
All this entails
\[
\delta(x)^s|\Ds_\eps\phi(x)|\ \leq\ \delta(x)^s|\psi(x)|+C\|\psi\|_{C^\alpha(\R^N)}\delta(x)^{\alpha-s}+C\|\phi\|_{C^s(\R^N)},
\qquad \hbox{when }\delta(x)<2\eps.
\]
For $\delta(x)\geq2\eps$ one does not have the second integral on the right-hand side of \eqref{111}
whereas the first one is computed on the ball of radius $\eps$,
but the same computations can be run.
This proves the statement of the Lemma.
\end{proof}

\begin{lem} Given a Radon measure $\nu\in\mathcal{M}(\Omega)$ such that $\delta^s\in L^1(\Omega,d\nu)$,
if a function $u\in L^1(\Omega)$ satisfies 
\begin{equation}\label{7890}
\int_\Omega u\Ds\xi\ =\ \int_\Omega\xi\;d\nu,\qquad\hbox{ for any }\xi\in\T(\Omega),
\end{equation}
then the same holds true for any $\xi\in\mathbb{X}_s$. 
\end{lem}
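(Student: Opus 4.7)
My strategy is to show that every $\xi\in\mathbb{X}_s$ can be approximated by a sequence $\xi_n\in\T(\Omega)$ in a way allowing passage to the limit on both sides of the identity that holds by hypothesis on $\T(\Omega)$. The key will be a uniform pointwise bound of the form $|\xi_n(x)|\leq C\delta(x)^s$, which makes $\delta^s\in L^1(\Omega,d\nu)$ play the role of a dominant against $\nu$.

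\emph{Step 1: $\xi$ equals its own Green potential.} Set $f:=\Ds\xi\in L^\infty(\Omega)$. Since $\xi\in C(\R^N)$ with $\mathrm{supp}\,\xi\subseteq\overline\Omega$, one has $\xi=0$ in $\C\Omega$ and $\xi\to 0$ as $x\to\partial\Omega$. Introduce the Green potential
\[
\tilde\xi(x)\ :=\ \int_\Omega G_\Omega(x,y)\,f(y)\,dy,
\]
which satisfies $\Ds\tilde\xi=f$ pointwise in $\Omega$, $\tilde\xi=0$ in $\C\Omega$, and $|\tilde\xi(x)|\leq C\|f\|_{L^\infty(\Omega)}\delta(x)^s$ via the standard estimate $\int_\Omega G_\Omega(x,y)\,dy\asymp\delta(x)^s$. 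Then $w:=\xi-\tilde\xi$ is continuous on $\R^N$, $s$-harmonic in $\Omega$, vanishes in $\C\Omega$, and is continuous up to $\partial\Omega$ with zero trace; the maximum principle forces $w\equiv 0$. Thus $\xi=\tilde\xi$ and in particular $|\xi(x)|\leq C\|f\|_{L^\infty(\Omega)}\delta(x)^s$.

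\emph{Step 2: Smooth approximants.} Choose $f_n\in C^\infty_c(\Omega)$ with $|f_n|\leq\|f\|_{L^\infty(\Omega)}$ and $f_n\to f$ a.e.\ in $\Omega$ (mollification and cutoff). Define
\[
\xi_n(x)\ :=\ \int_\Omega G_\Omega(x,y)\,f_n(y)\,dy,
\]
so that $\Ds\xi_n=f_n$ in $\Omega$, $\xi_n=0$ in $\C\Omega$, $E\xi_n=0$, and $|\xi_n(x)|\leq C\|f\|_{L^\infty(\Omega)}\delta(x)^s$ uniformly in $n$. Boundary regularity for the fractional Dirichlet problem with $C^\infty_c$ source (see e.g.\ \cite[Proposition 1.2.9]{a} and \cite{silvestre}) yields $\xi_n\in C^s(\R^N)$, whence $\xi_n\in\T(\Omega)$ by construction. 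Applying the hypothesis \eqref{7890} to $\xi_n$ gives
\[
\int_\Omega u\,f_n\ =\ \int_\Omega u\,\Ds\xi_n\ =\ \int_\Omega\xi_n\,d\nu.
\]

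\emph{Step 3: Passage to the limit.} The left-hand side converges to $\int_\Omega u\,f=\int_\Omega u\,\Ds\xi$ by dominated convergence, with dominant $\|f\|_{L^\infty(\Omega)}|u|\in L^1(\Omega)$. For the right-hand side, dominated convergence inside the Green potential representation yields $\xi_n(x)\to\xi(x)$ for every $x\in\Omega$; the uniform bound $|\xi_n|\leq C\|f\|_{L^\infty(\Omega)}\delta^s$, together with $\delta^s\in L^1(\Omega,d\nu)$ from the hypothesis on $\nu$, allows a second dominated convergence giving $\int_\Omega\xi_n\,d\nu\to\int_\Omega\xi\,d\nu$. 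Combining the two limits yields the identity for $\xi\in\mathbb{X}_s$.

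The main obstacle is Step~2: the production of admissible approximants relies on a $C^s(\R^N)$-regularity result for the fractional Dirichlet problem with $C^\infty_c$ right-hand side, needed to certify $\xi_n\in\T(\Omega)$ and thus legitimise the application of the hypothesis. Step~1 is a secondary delicate point: it is precisely the $\delta^s$-bound produced there that, in Step~3, enables dominated convergence with respect to the possibly singular Radon measure $\nu$.
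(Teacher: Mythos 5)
Your proof is correct and follows the same general strategy as the paper's: approximate $\zeta:=\Ds\xi\in L^\infty(\Omega)$ by data in $C^\infty_c(\Omega)$, solve the corresponding Dirichlet problems to get admissible test functions $\xi_n\in\T(\Omega)$, apply the hypothesis, and pass to the limit using the uniform bound $|\xi_n|\leq C\|\zeta\|_{L^\infty}\delta^s$ together with $\delta^s\in L^1(\Omega,d\nu)$. The differences are worth pointing out, though, as they make your version tighter. The paper uses a two-stage approximation (mollify $\zeta$ to $\zeta_\eps\in C^\infty(\R^N)$, then cut off to $\zeta_{\eps,\rho}=b_\rho\zeta_\eps\in C^\infty_0(\Omega)$, passing first $\rho\downarrow0$ and then $\eps\downarrow0$), which you compress into one. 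More substantively, in your Step~1 you prove that $\xi$ coincides with the Green potential of its own $\Ds\xi$, via the $\delta^s$-estimate on $G_\Omega$ and a maximum-principle argument for the continuous $s$-harmonic remainder $w=\xi-\tilde\xi$. The paper leaves this implicit: after establishing $\int_\Omega u\zeta_\eps=\int_\Omega\xi_\eps\,d\nu$ it lets $\eps\downarrow0$ and identifies the limit of $\xi_\eps=G[\zeta_\eps]$ with $\xi$ without comment, which really does require knowing $\xi=G[\zeta]$. Your Step~1 is precisely the missing justification, so the proposal is not only valid but actually closes a small gap in the paper's exposition. The only point you should be a bit more explicit about in Step~1 is why $w\equiv0$: since $w$ is continuous on $\R^N$, vanishes identically in $\C\Omega$ (hence on $\partial\Omega$), and is $s$-harmonic in $\Omega$, a positive interior maximum at $x_0$ would force $\Ds w(x_0)>0$, a contradiction; the symmetric argument for the infimum finishes it. With that spelled out the argument is complete.
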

\begin{proof}
Pick $\xi\in\mathbb{X}_s$: by definition, $\zeta:=\Ds\xi\in L^\infty(\Omega)$. 
Consider the standard mollifier $\eta\in C^\infty_c(\R^N)$ and $\eta_\eps(x):=\eps^{-N}\eta(x/\eps)$.
Then
\begin{equation}\label{zetaeps}
\zeta_\eps:=\zeta\chi_\Omega*\eta_\eps \in C^\infty(\R^N)\hbox{ and }\|\zeta_\eps\|_{L^\infty(\Omega)}\leq\|\zeta\|_{L^\infty(\Omega)}.
\end{equation}
Define $\xi_\eps$ as the solution to
\[
\left\lbrace\begin{aligned}
\Ds\xi_\eps &=\ \zeta_\eps & \hbox{ in }\Omega \\
\xi_\eps &=\ 0 & \hbox{ in }\C\Omega \\
E\xi_\eps &=\ 0 & \hbox{ on }\partial\Omega.
\end{aligned}\right.
\]
Also, for $\rho>0$ small consider
\[
\Omega_\rho:=\{x\in\Omega:\delta(x)>\rho\}
\]
and a bump function $b_\rho\in C^\infty_c(\R^N)$ such that
\[
b_\rho\equiv 1\hbox{ in }\Omega_{2\rho},\ b_\rho\equiv0\hbox{ in }\R^N\setminus\Omega_\rho,\ 0\leq b_\rho\leq 1\hbox{ in }\R^N.
\]
Then $\zeta_{\eps,\rho}:=b_\rho\zeta_\eps\in C^\infty_0(\Omega)$. 
Let $\xi_{\eps,\rho}\in\T(\Omega)$ be the function induced by $\zeta_{\eps,\rho}$.
By \eqref{7890},
\begin{equation}\label{231456}
\int_\Omega u\zeta_{\eps,\rho}\ =\ \int_\Omega \xi_{\eps,\rho}\;d\nu.
\end{equation}
It holds $\zeta_{\eps,\rho}\rightarrow\zeta_\eps$ as $\rho\downarrow0$,
with $\|\zeta_{\eps,\rho}\|_{L^\infty(\Omega)}\leq\|\zeta_\eps\|_{L^\infty(\Omega)}$ and
\[
|\xi_{\eps,\rho}(x)|\ \leq\ C\|\zeta_{\eps,\rho}\|_{L^\infty(\Omega)}\delta(x)^s
\leq \|\zeta_{\eps}\|_{L^\infty(\Omega)}\delta(x)^s
\]
so that we can push equality \eqref{231456} to the limit to deduce,
by dominated convergence,
\begin{equation}\label{2356}
\int_\Omega u\zeta_\eps\ =\ \int_\Omega \xi_\eps\;d\nu.
\end{equation}
Similarly, since $\|\zeta_{\eps}\|_{L^\infty(\Omega)}\leq \|\zeta\|_{L^\infty(\Omega)}$,
letting $\eps\downarrow 0$ yields
\[
\int_\Omega u\zeta\ =\ \int_\Omega \xi\;d\nu.
\]
\end{proof}

%

\section{examples}

In the next two examples we look at the two critical cases in the power-like nonlinearity,
adding a logarithmic weight.

\begin{ex}[\bf Lower critical case for powers]\rm
We consider here $f(t)=t^{1+2s}\ln^\alpha(1+t)$, $\alpha>0$.
In this case 
\[
\frac{tf'(t)}{f(t)}=\frac{(1+2s)f(t)+\frac{\alpha t f(t)}{(1+t)\ln(1+t)}}{f(t)}=1+2s+\frac{\alpha t}{(1+t)\ln(1+t)}.
\]
Condition \eqref{L1-bis} turns into
\[
\int_u^{+\infty}\left(\frac{t}{t^{1+2s}\ln^\alpha(1+t)}\right)^{1/(2s)}dt=
\int_u^{+\infty}\frac{dt}{t\ln^{\alpha/(2s)}(1+t)}\ <\ +\infty
\]
which is fulfilled only for $\alpha>2s$. Also, hypothesis \eqref{E} becomes
\[
\int_{t_0}^{+\infty}t^{1+2s-2/(1-s)}\ln^\alpha(1+t)\;dt\ <\ +\infty
\]
which is satisfied by any $\alpha>0$ since $(1+2s)(1-s)-2<s-1$.
\hfill$\blacklozenge$
\end{ex}

\begin{ex}[\bf Upper critical case for powers]\rm
We consider here $f(t)=t^{\frac{1+s}{1-s}}\ln^{-\beta}(1+t)$, $\beta>0$.
In this case 
\[
\frac{tf'(t)}{f(t)}=\frac{\frac{1+s}{1-s}f(t)-\frac{\beta t f(t)}{(1+t)\ln t}}{f(t)}=\frac{1+s}{1-s}-\frac{\beta t}{(1+t)\ln(1+t)}
\]
Hypothesis \eqref{L1-bis} turns into
\[
\int_u^{+\infty}\left(\frac{t\ln^\beta(1+t)}{t^{(1+s)/(1-s)}}\right)^{1/(2s)}\;dt=
\int_u^{+\infty}\frac{\ln^{\beta/(2s)}(1+t)}{t^{1/(1-s)}}\;dt
\ <\ +\infty
\]
which is fulfilled for any $\beta>0$. Also, hypothesis \eqref{E} becomes
\[
\int_{t_0}^{+\infty}t^{-1}\ln^{-\beta}(1+t)\;dt\ <\ +\infty
\]
which is satisfied by any $\beta>1$.
\hfill$\blacklozenge$
\end{ex}

\section*{acknowledgements}

The author is thankful to L. Dupaigne and E. Valdinoci for
the reading of the paper and fruitful discussion.

\section*{references}
\addcontentsline{toc}{section}{references}

\begin{biblist}

\bib{a}{article}{
   author={Abatangelo, N.},
   title={Large $s$-harmonic functions and boundary blow-up solutions for the fractional Laplacian},
   journal={Discrete and Continuous Dynamical Systems - Series A},
   volume={35},
   date={2015},
   number={12},
   pages={5555--5607},
}

\bib{bandle-marcus}{article}{
   author={Bandle, C.},
   author={Marcus, M.},
   title={``Large'' solutions of semilinear elliptic equations: existence,
   uniqueness and asymptotic behaviour},
   journal={J. Anal. Math.},
   volume={58},
   date={1992},
   pages={9--24},
}

\bib{bieberbach}{article}{
   author={Bieberbach, L.},
   title={$\Delta u=e^u$ und die automorphen Funktionen},
   language={German},
   journal={Mathematische Annalen},
   volume={77},
   date={1916},
   number={2},
   pages={173--212},
}

\bib{bogdan}{article}{
   author={Bogdan, K.},
   title={Representation of $\alpha$-harmonic functions in Lipschitz
   domains},
   journal={Hiroshima Mathematical Journal},
   volume={29},
   date={1999},
   number={2},
   pages={227--243},
}

\bib{extension}{article}{
   author={Caffarelli, L.},
   author={Silvestre, L.},
   title={An extension problem related to the fractional Laplacian},
   journal={Comm. Partial Differential Equations},
   volume={32},
   date={2007},
   number={7-9},
   pages={1245--1260},
}

\bib{chen-felmer}{article}{
	author = {Chen, H.},
	author={Felmer, P.},
	author={Quaas, A.},
	title = {Large solutions to elliptic
	equations involving the fractional Laplacian},
	journal = {Annales de l'Institut H. Poincar\'e},
	pages = {in press},
	year = {2014},
}

\bib{chen-veron}{article}{
	author = {Chen, H.},
	author = {V\'eron, L.},
	title = {Semilinear fractional elliptic equations
	involving measures},
	journal = {Journal of Differential Equations},
	volume = {257},
	year = {2014},
	pages = {1457--1486},
}

\bib{chen}{article}{
    AUTHOR = {Chen, Z.-Q.},
     TITLE = {Multidimensional symmetric stable processes},
   JOURNAL = {The Korean Journal of Computational \& Applied Mathematics. An
              International Journal},
    VOLUME = {6},
      YEAR = {1999},
     PAGES = {227--266},
}

\bib{legall-dhersin}{article}{
   author={Dhersin, J.-S.},
   author={Le Gall, J.-F.},
   title={Wiener's test for super-Brownian motion and the Brownian snake},
   journal={Probabability Theory Related Fields},
   volume={108},
   date={1997},
   number={1},
   pages={103--129},
}

\bib{hitchhiker}{article}{
    AUTHOR = {Di Nezza, E.},
    author = {Palatucci, G.},
    author = {Valdinoci, E.},
     TITLE = {Hitchhiker's guide to the fractional {S}obolev spaces},
   JOURNAL = {Bulletin des Sciences Math\'ematiques},
    VOLUME = {136},
      YEAR = {2012},
    NUMBER = {5},
     PAGES = {521--573},
}

\bib{ddgr}{article}{
    AUTHOR = {Dumont, S.}, 
    author = {Dupaigne, L.},
    author = {Goubet, O.},
    author = {R{\u{a}}dulescu, V.},
     TITLE = {Back to the {K}eller-{O}sserman condition for boundary blow-up
              solutions},
   JOURNAL = {Advanced Nonlinear Studies},
    VOLUME = {7},
      YEAR = {2007},
    NUMBER = {2},
     PAGES = {271--298},
}

\bib{felmer-quaas}{article}{
	author = {Felmer, P.},
	author={Quaas, A.},
    title = {Boundary blow up solutions for fractional elliptic equations},
   	journal = {Asymptotic Analysis},
    volume = {78},
    year = {2012},
    number = {3},
    pages = {123--144},
}

\bib{mar-sire}{article}{
   author={Gonz{\'a}lez, M.M.},
   author={Mazzeo, R.},
   author={Sire, Y.},
   title={Singular solutions of fractional order conformal Laplacians},
   journal={Journal of Geometric Analysis},
   volume={22},
   date={2012},
   number={3},
   pages={845--863},
}

\bib{grubb1}{article}{
   author={Grubb, G.},
   title={Fractional Laplacians on domains, a development of H\"ormander's
   theory of $\mu$-transmission pseudodifferential operators},
   journal={Advances in Mathematics},
   volume={268},
   date={2015},
   pages={478--528},
}

\bib{grubb2}{article}{
	author = {Grubb, G.},
    title = {Boundary problems for $\mu$-transmission pseudodifferential operators, including fractional Laplacians},
   	journal = {preprint at arXiv:1403.7140},
    year = {Dec. 2014},
}

\bib{keller}{article}{
   author={Keller, J.B.},
   title={On solutions of $\Delta u=f(u)$},
   journal={Communications in Pure and Applied Mathematics},
   volume={10},
   date={1957},
   pages={503--510},
}

\bib{labutin}{article}{
   author={Labutin, D.A.},
   title={Wiener regularity for large solutions of nonlinear equations},
   journal={Arkiv f\"or Matematik},
   volume={41},
   date={2003},
   number={2},
   pages={307--339},
}

\bib{legall}{article}{
   author={Le Gall, J.-F.},
   title={The Brownian snake and solutions of $\Delta u=u^2$ in a domain},
   journal={Probabability Theory Related Fields},
   volume={102},
   date={1995},
   number={3},
   pages={393--432},
}

\bib{ln}{article}{
   author={Loewner, C.},
   author={Nirenberg, L.},
   title={Partial differential equations invariant under conformal or
   projective transformations},
   conference={
      title={Contributions to Analysis (a collection of papers dedicated to
      Lipman Bers)},
   },
   book={
      publisher={Academic Press, New York},
   },
   date={1974},
   pages={245--272},
}

\bib{osserman}{article}{
   author={Osserman, R.},
   title={On the inequality $\Delta u\geq f(u)$},
   journal={Pacific Journal of Mathematics},
   volume={7},
   date={1957},
   pages={1641--1647},
}

\bib{ponce}{article}{
   author={Ponce, A.C.},
   title={Selected problems on elliptic equations involving measures},
   journal={arXiv:1204.0668},
   date={July 2014},
}

\bib{servadei-yama}{article}{
   author={Servadei, R.},
   title={The Yamabe equation in a non-local setting},
   journal={Adv. Nonlinear Anal.},
   volume={2},
   date={2013},
   number={3},
   pages={235--270},
}

\bib{silvestre}{article}{
    Author = {Silvestre, L.},
    Title = {Regularity of the obstacle problem for a fractional power of the Laplace operator},
    Journal = {Communications on Pure and Applied Mathematics},
    Volume = {60},
    Number = {1},
    Pages = {67--112},
    Year = {2007},
}

\end{biblist}

\end{document}